\pgfplotsset{compat=1.18}
\numberwithin{equation}{section}
\theoremstyle{plain}
\newtheorem{thm}{Theorem}[section]
\crefname{thm}{Theorem}{Theorems}
\newtheorem{lem}[thm]{Lemma}
\crefname{lem}{Lemma}{Lemmas}
\newtheorem{prop}[thm]{Proposition}
\crefname{prop}{Proposition}{Propositions}
\newtheorem{cor}[thm]{Corollary}
\newtheorem{claim}{Claim}[section]
\newtheorem{introthm}{Theorem}[section]
\theoremstyle{definition}
\newtheorem{dfn}[thm]{Definition}
\newtheorem{eg}[thm]{Example}
\newtheorem{setup}[thm]{Setup}
\newtheorem*{Ack}{Acknowledgement}
\newtheorem*{NoCon}{Notation and Conventions}
\newtheorem*{Out}{Outline of this paper}
\theoremstyle{remark}
\newtheorem*{rem}{Remark}
\DeclareMathOperator{\Ext}{Ext}
\DeclareMathOperator{\ext}{ext}
\DeclareMathOperator{\Pic}{Pic}
\DeclareMathOperator{\Aut}{Aut}
\DeclareMathOperator{\Auteq}{Auteq}
\DeclareMathOperator{\Hilb}{Hilb}
\DeclareMathOperator{\Sym}{Sym}
\DeclareMathOperator{\ch}{ch}
\DeclareMathOperator{\id}{id}
\DeclareMathOperator{\Cone}{Cone}
\DeclareMathOperator{\Deck}{Deck}
\newcommand\Hom{\mathop{\mathrm{Hom}}\nolimits}
\newcommand\RHom{\mathop{\mathbf{R}\mathrm{Hom}}\nolimits}
\newcommand{\PP}[1]{\mathbb{P}^{#1}}
\newcommand{\thcl}[1]{\langle{#1}\rangle_{\mathrm{th}}}
\newcommand{\abs}[1]{\lvert{#1}\rvert}
\newcommand{\lderived}{\mathbf{L}}
\newcommand{\rderived}{\mathbf{R}}
\newcommand{\linsys}[1]{\lvert{#1}\rvert}
\DeclareMathOperator{\For}{\mathsf{For}}
\DeclareMathOperator{\Ind}{\mathsf{Ind}}
\DeclareMathOperator{\BKR}{\mathsf{BKR}}
\newcommand\dual{\raise0.9ex\hbox{$\scriptscriptstyle\vee$}}
\newcommand{\htop}{h_{\mathrm{top}}}
\newcommand{\hcat}{h_{\mathrm{cat}}}
\newcommand{\numg}[1]{{#1}^{N}}
\newcommand{\Phin}{\Phi^{\boxtimes n}}
\newcommand{\boxn}{\boxtimes n}
\newcommand{\CB}{\mathbb{C}}
\newcommand{\RB}{\mathbb{R}}
\newcommand{\ZZ}{\mathbb{Z}}
\newcommand{\DC}{\mathcal{D}}
\newcommand{\LC}{\mathcal{L}}
\newcommand{\OO}{\mathscr{O}}
\author{Tomoki Yoshida}
\address[T.Yoshida]{Department~of~Mathematics, School~of~Science~and~Engineering, Waseda~University, Ohkubo~3-4-1, Shinjuku, Tokyo~169-8555, Japan}
\email{\href{mailto:tomoki_y@asagi.waseda.jp}{tomoki\_y@asagi.waseda.jp}}
\title[Categorical Entropies of Hilbert Schemes and Hyperk\"ahler Manifolds]{Categorical Entropies of Hilbert Schemes of points on Surfaces and Hyperk\"ahler Manifolds}
\date{January 20, 2026}
\keywords{Derived category, Categorical Entropy, Hilbert Schemes, Hyperk\"ahler Manifolds.}
\subjclass[2020]{14F08 (primary), 14J42, 14L30 (secondary).}
\begin{document}
\begin{abstract}
This paper studies the categorical entropy of autoequivalences of derived categories of Hilbert schemes of points on surfaces and hyperk\"ahler manifolds.
One of the central questions about categorical entropy is whether it satisfies a Gromov–Yomdin type formula $h_{\mathrm{cat}}(\Phi) = \log\rho(\Phi)$. We say that $X$ has the Gromov–Yomdin (GY) property if this formula holds.
We prove that if a surface $S$ fails to satisfy the (GY) property (e.g., K3 surfaces), then so does $\Hilb^n(S)$.
Moreover, we show that no hyperk\"ahler or Enriques manifold satisfies the (GY) property by constructing an explicit autoequivalence with positive categorical entropy but unipotent action on the cohomology ring.
\end{abstract}
\maketitle
\setcounter{tocdepth}{2} 
\tableofcontents
\setcounter{section}{-1}
\section{Introduction}\label{section: introduction}
A dynamical system $(X, f)$ is a pair of a topological space and an endomorphism.
To quantify the ``complexity'' of an automorphism of infinite order, the \emph{entropy} of the dynamical system plays an important role.
The entropy $\htop(f)$ of $(X, f)$ is a real number defined by using a limit of iterated actions, although the actual definition depends on the context.
It is often difficult to determine or calculate its value.

We consider only the case where $X$ is a smooth projective variety over $\CB$. 
In such a case, the following formula holds:

\begin{thm}[\cite{gromov_1987_entropy_homology_and_semialgebraic_geometry,gromov_2003_on_the_entropy_of_holomorphic_maps,yomdin_1987_volume_growth_and_entropy}]
\label{theorem: Gromov-Yomdin equality}
    Let $X$ be a smooth projective variety over $\CB$ and $f$ be a surjective endomorphism of $X$.
    Then, the following equality holds:
    \[
        \htop(f) = \log\rho(f^*\rvert_{\oplus_{p=0}^{\dim X}H^{p, p}(X, \ZZ)}), 
    \]
    where $\rho(-)$ is the spectral radius of the induced linear map on $\oplus_{p}H^{p,p}(X, \ZZ)$.
\end{thm}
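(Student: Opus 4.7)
The plan is to prove the two inequalities $\htop(f) \leq \log \rho(f^*|_{\oplus H^{p,p}})$ and $\htop(f) \geq \log \rho(f^*|_{\oplus H^{p,p}})$ separately. The upper bound is Gromov's theorem and proceeds via a volume growth estimate for graphs of iterates. The lower bound combines Yomdin's general inequality for smooth self-maps with a Hodge-theoretic comparison of spectral radii on the pieces of the Hodge decomposition.

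For the upper bound, I would fix a K\"ahler form $\omega$ on $X$ coming from a projective embedding and, for each $n \geq 1$, consider the graph
\[
\Gamma_n = \{(x, f(x), f^2(x), \ldots, f^{n-1}(x)) \mid x \in X\} \subset X^n,
\]
an analytic subvariety of dimension $d = \dim X$. Gromov's lemma bounds $\htop(f)$ above by $\limsup_n \frac{1}{n}\log \operatorname{Vol}(\Gamma_n)$, where the volume is measured using the product K\"ahler form $\omega_n = \sum_{k=0}^{n-1} \pi_k^*\omega$ on $X^n$. Expanding $\omega_n^d/d!$ and pushing forward along the isomorphism $\Gamma_n \cong X$, the volume becomes a polynomially weighted sum of intersection numbers $\int_X (f^{k_1})^*\omega \wedge \cdots \wedge (f^{k_d})^*\omega$ with $0 \leq k_i \leq n-1$. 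Since the classes $(f^k)^*[\omega]$ all lie in $\oplus_p H^{p,p}(X, \CB)$, these integrals are controlled by operator norms of iterates of $f^*$ restricted to this graded subring; a direct estimate yields $\operatorname{Vol}(\Gamma_n) = O(n^d \rho^n)$ with $\rho = \rho(f^*|_{\oplus H^{p,p}})$, giving the claimed bound on $\htop(f)$.

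For the lower bound, Yomdin's theorem gives $\htop(f) \geq \log \rho(f^*|_{H^*(X, \RB)})$, the spectral radius being taken over the full real cohomology. Since $f$ is holomorphic, $f^*$ preserves the Hodge decomposition $H^k(X, \CB) = \oplus_{p+q=k} H^{p,q}$, and the blocks $H^{p,q}$ and $H^{q,p}$ are complex conjugate, so share their spectral radii. It remains to show that for each $(p,q)$ the spectral radius on $H^{p,q}$ is dominated by that on $\oplus_p H^{p,p}$; I would derive this from the inequality $\rho(f^*|_{H^{p,q}})^2 \leq \rho(f^*|_{H^{p,p}}) \cdot \rho(f^*|_{H^{q,q}})$, which is a Cauchy--Schwarz estimate applied to the Hodge--Riemann polarization form. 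I expect the main obstacle to lie precisely in this mixed Hodge--Riemann inequality and in the combinatorial bookkeeping of intersection numbers in the Gromov step; these are the substantive technical inputs, whereas Yomdin's general result (which itself uses highly nontrivial semialgebraic geometry) I would treat as a black box rather than reprove.
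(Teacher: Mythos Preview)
The paper does not prove this theorem at all: it is stated with citations to Gromov and Yomdin as a classical background result, and no argument is given in the paper itself. There is therefore nothing in the paper to compare your proposal against.

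That said, your outline is essentially the standard route to the Gromov--Yomdin equality and is correct in its broad strokes. A couple of remarks. First, what Yomdin actually proves is an inequality between topological entropy and volume growth of submanifolds for $C^\infty$ maps; the passage from this to $\htop(f)\ge\log\rho(f^*|_{H^*(X,\RB)})$ already uses some input specific to the K\"ahler/holomorphic setting (namely that pullbacks of K\"ahler classes control volume growth), so it is slightly misleading to package that step as ``Yomdin's theorem'' alone. Second, the mixed Hodge--Riemann/Khovanskii--Teissier inequality you isolate, $\rho(f^*|_{H^{p,q}})^2 \le \rho(f^*|_{H^{p,p}})\,\rho(f^*|_{H^{q,q}})$, is indeed the genuine technical point in reducing the full cohomological spectral radius to the $(p,p)$ part; this is a result of Dinh and Dinh--Sibony and is not a triviality, so your identification of it as the main obstacle is accurate.
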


However, the cases in which the entropy is actually positive are relatively rare.
For the case where $X$ is a surface, Cantat \cite{cantat_1999_dynamique_des_automorphismes_des_surfaces_projectives_complexes} showed that if $X$ admits an automorphism of positive topological entropy, $X$ is birational to the projective plane $\PP{2}$, a K3 surface, an Enriques surface, or a complex torus.
While no full classification analogous to Cantat's is known in higher dimensions, identifying the class of varieties that can admit positive entropy remains a major open problem.

\subsection{Dynamical System on Derived Categories}\label{subsection: dynamical system on der cats}
Let $X$ be a smooth projective variety over $\CB$ and $D^b(X)$ be the derived category of coherent sheaves on $X$.
The group of autoequivalences of $D^b(X)$ contains information about $\Aut(X)$ as part of the standard autoequivalences.
That is, if we denote
\[
A(X) \coloneqq \Pic(X) \rtimes \Aut(X) \times\ZZ[1], 
\]
there is an injection $A(X)\hookrightarrow \Auteq(X)$, whose image is called \emph{standard autoequivalence}. Note that if $X$ is Fano or of general type, $\Auteq(X) = A(X)$ due to Bondal and Orlov \cite{bondal_orlov_2001_reconstruction_of_a_variety_from_the_derived_category_and_groups_of_autoequivalences}.
In this context, it is natural to investigate dynamical systems on triangulated categories.
In \cite{dimitrov_haiden_katzarkov_kontsevich_2014_dynamical_systems_and_categories}, Dimitrov, Haiden, Katzarkov, and Kontsevich generalized the notion of topological entropy for an automorphism to an autoequivalence of the derived category of coherent sheaves.

However, in general, it is difficult to determine the value of $\hcat(\Phi)$ for a given $\Phi\in \Auteq X$. An autoequivalence of $D^b(X)$ induces the automorphism of $H^{*}(X, \CB)$ or numerical Grothendieck group $N(X)_{\CB}$. We denote it by $\numg{\Phi}$.
It is desirable to have a formula for calculation in comparison with the case of topological entropy, i.e., for every $\Phi\in\Auteq X$, 
\[
\hcat(\Phi) = \log\rho(\numg{\Phi}), 
\]
as an analogue with \cref{theorem: Gromov-Yomdin equality}.
As is mentioned later, it is proved that this equality holds for some specific classes of varieties, while it doesn't hold for other classes of varieties.
We say that $X$ has the \emph{Gromov-Yomdin (GY) property} if the following condition holds:

\begin{enumerate}[label={\textbf{(GY)}}, ref={(GY)}]
    \item \label{property: Gromov-Yomdin property}
\begin{center}
    for all $\Phi\in\Auteq (X)$, $\hcat(\Phi) = \log\rho (\numg{\Phi})$. 
\end{center}
\end{enumerate}

Motivated by Cantat's result and Gromov-Yomdin equality (\cref{theorem: Gromov-Yomdin equality}), common to studies of categorical entropies, the main questions concerning categorical entropy are the following:
\begin{enumerate}
    \item Which class admits an automorphism of positive categorical entropy?
    \item Which class have the \ref{property: Gromov-Yomdin property} property?
\end{enumerate}

Previous works have provided examples that satisfy or fail to satisfy the (GY) property.
Kikuta was the first to discuss these problems and showed that (GY) holds for curves \cite{kikuta_2017_on_entropy_for_autoequivalences_of_the_derived_category_of_curves}.
Then, Kikuta and Takahashi proved the Gromov-Yomdin type equality for all standard autoequivalences of derived categories of coherent sheaves on any smooth projective variety over $\CB$ \cite{kikuta_takahashi_2019_on_the_categorical_entropy_and_the_topological_entropy}.
In particular, if $X$ is Fano or of general type, $X$ have (GY) property. 
Also, Kikuta, Shiraishi, and Takahashi proved the same result for orbifold projective lines \cite{kikuta_shiraishi_takahashi_2020_a_note_on_entropy_of_autoequivalences_lower_bound_and_the_case_of_orbifold_projective_lines}, Yoshioka proved it for abelian surfaces and simple abelian varieties \cite{yoshioka_2020_categorical_entropy_for_fouriermukai_transforms_on_generic_abelian_surfaces}, and the author for bielliptic surfaces \cite{preprint_tomoki_2025_a_note_on_categorical_entropy_of_bielliptic_surfaces_and_enriques_surfaces}.

On the other hand, some examples are known not to satisfy the (GY) property.
Fan observed that every even-dimensional Calabi-Yau hypersurface with $\dim >3$ fails to satisfy the (GY) property \cite{fan_2018_entropy_of_an_autoequivalence_on_calabiyau_manifolds}.
After that, Ouchi discovered that any projective K3 surface do not have the (GY) property in \cite{ouchi_2020_on_entropy_of_spherical_twists}. 
This result was extended by Mattei \cite{mattei_2021_categorical_vs_topological_entropy_of_autoequivalences_of_surfaces} to the case of surfaces containing a $(-2)$-curve.
In \cite{barbacovi_kim_2023_entropy_of_the_composition_of_two_spherical_twists}, Barbacovi and Kim showed that, for even integers $m, n\ge2$, type $(m+1, n+1)$-divisor in $\PP{m}\times\PP{n}$ also fails to satisfy (GY) \cite[Example 5.0.2.]{barbacovi_kim_2023_entropy_of_the_composition_of_two_spherical_twists}
by determining the categorical entropy of the composition of two spherical functors.
Also, the author 
\cite{preprint_tomoki_2025_a_note_on_categorical_entropy_of_bielliptic_surfaces_and_enriques_surfaces}
showed that Enriques surfaces do not satisfy (GY) by using their canonical covering.

\subsection{Results}\label{subsection: Results}
The first aim of this paper is to investigate categorical dynamics on derived categories of Hilbert schemes of points on surfaces, especially those that admit an autoequivalence of positive categorical entropy. Our first theorem is as follows:
\begin{introthm}[\cref{theorem: positivity inherits to hilbert scheme,theorem: GY property for Hilb}]\label{theorem in intro: GY property for Hilb}
    Let $S$ be a smooth projective surface.
    Assume that there is an autoequivalence $\Phi\in\Auteq(S)$ such that $\hcat(\Phi)>0$.
    Then, $\Hilb^{n}(S)$ also admits an autoequivalence with positive categorical entropy for every $n\in\ZZ_{>0}$.
    
    Moreover, assume that $S$ does not satisfy \ref{property: Gromov-Yomdin property}.
    Then, $\Hilb^n(S)$ fails to satisfy the \ref{property: Gromov-Yomdin property} property for every $n\in\ZZ_{>0}$.
\end{introthm}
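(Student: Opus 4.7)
The plan is to exploit the Bridgeland--King--Reid--Haiman equivalence $\BKR\colon D^b(\Hilb^n(S)) \xrightarrow{\sim} D^b_{S_n}(S^n)$ and to lift an arbitrary autoequivalence $\Phi \in \Auteq(S)$ to $\Hilb^n(S)$ via its external tensor product. Concretely, $\Phi^{\boxn} := \Phi \boxtimes \cdots \boxtimes \Phi$ is an autoequivalence of $D^b(S^n)$, and the obvious permutation isomorphisms endow it with a canonical $S_n$-equivariant structure; it therefore descends to $\widetilde{\Phi^{\boxn}} \in \Auteq(D^b_{S_n}(S^n))$, which transports through $\BKR$ to an autoequivalence $\Psi$ of $D^b(\Hilb^n(S))$.

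The crux is then the twofold identity $\hcat(\Psi) = \hcat(\Phi^{\boxn}) = n\cdot \hcat(\Phi)$. The second equality is the standard K\"unneth-type product formula for entropy of external tensor products. For the first, I would use the forgetful/induction biadjoint pair $\For\colon D^b_{S_n}(S^n) \rightleftarrows D^b(S^n) \colon\Ind$, the intertwining $\For\circ \Psi \simeq \Phi^{\boxn}\circ\For$, and the isomorphism $\Ind\circ\For(E) \simeq E \otimes \CB[S_n]$. Choosing a split generator $G$ of $D^b_{S_n}(S^n)$, one checks that $\For(G)$ classically generates $D^b(S^n)$, and the adjunction yields
\[
\ext^{\bullet}\bigl(\For G,\, (\Phi^{\boxn})^k(\For G)\bigr) \;=\; n!\cdot \ext^{\bullet}\bigl(G,\, \Psi^{k} G\bigr),
\]
so the two exponential growth rates coincide.

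On the numerical side, $\BKR$ together with Nakajima/Haiman-type descriptions identifies the action of $\numg{\Psi}$ on $N(\Hilb^n(S))_{\CB}$ as being built from $\numg{\Phi}^{\otimes n}$ on $N(S)_{\CB}^{\otimes n}$ and from symmetric/tensor powers attached to fixed loci of non-trivial $\sigma\in S_n$; in every such sector the spectral radius is at most $\rho(\numg{\Phi})^n$, giving the bound
\[
\log\rho(\numg{\Psi}) \;\le\; n\log\rho(\numg{\Phi}).
\]
Combining the two ingredients: if $\hcat(\Phi)>0$ then $\hcat(\Psi) = n\hcat(\Phi) > 0$, yielding the first assertion; and if in addition $\hcat(\Phi)>\log\rho(\numg{\Phi})$, then
\[
\hcat(\Psi) = n\hcat(\Phi) > n\log\rho(\numg{\Phi}) \ge \log\rho(\numg{\Psi}),
\]
so $\Hilb^n(S)$ fails the \ref{property: Gromov-Yomdin property} property.

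The main obstacle is the transfer of categorical entropy across the equivariant equivalence: although the adjunction calculation is formally clean, making rigorous the passage from the $\ext$-growth rate of $\For(G)$ under $\Phi^{\boxn}$ to that of $G$ under $\Psi$ requires care with the choice of generator and with the fact that $\hcat$ is intrinsic to the category. A secondary subtlety lies in the numerical bound: twisted-sector contributions from fixed loci of non-trivial $\sigma\in S_n$ must be shown not to inflate $\log\rho(\numg{\Psi})$ beyond $n\log\rho(\numg{\Phi})$, which should follow from the fact that each such sector contributes at most a symmetric power of the spectrum of $\numg{\Phi}$ indexed by a partition of $n$.
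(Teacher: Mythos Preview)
Your proposal is correct and follows essentially the same route as the paper: construct $\Psi$ from $\Phi^{\boxtimes n}$ via Ploog's descent and the BKR--Haiman equivalence, prove $\hcat(\Psi)=n\hcat(\Phi)$ by combining K\"unneth with the $\For/\Ind$ biadjunction, and compare spectral radii to conclude.

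Two small differences in execution are worth noting. First, the paper runs the adjunction in the opposite direction: it fixes generators $G,G'$ of $D^b(S^n)$, inserts $\For\circ\Ind$ in front of $G'$, and uses the commutation $\Phi^{\boxtimes n}\circ\For\cong\For\circ\Psi$ together with the fact (their Lemma~4.3) that $\Ind$ preserves classical generators; this gives an \emph{exact} equality $\delta'(G,(\Phi^{\boxtimes n})^m\For\Ind G')=\delta'(\Ind G,\Psi^m\Ind G')$, avoiding any multiplicative constant. Your claimed identity $\ext^\bullet(\For G,(\Phi^{\boxtimes n})^k\For G)=n!\cdot\ext^\bullet(G,\Psi^k G)$ is not literally true, since $\Ind\For G\cong G\otimes\CB[S_n]$ decomposes into twists $G\otimes\chi$ that need not have identical $\ext$-groups with $\Psi^k G$; of course this is harmless for the growth rate, as each $G\otimes\chi$ is again a generator. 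Second, on the numerical side the paper obtains the sharper \emph{equality} $\log\rho(\numg{\Psi})=n\log\rho(\numg{\Phi})$ by a direct argument: if $v\in N(S)_\CB$ realizes $\rho(\numg{\Phi})$ then $v^{\otimes n}$ is $S_n$-invariant and realizes $\rho(\numg{\Phi})^n$, while the upper bound comes from the commutative square with $\For$; this bypasses the twisted-sector bookkeeping you outline.
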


In particular, considering the case where $S$ is a K3 surface, the theorem gives an example of hyperk\"ahler manifolds not having (GY).

The key ingredients of the proof are \emph{equivariant categories} and the so-called BKR-Haiman equivalences. 
Let $\Phi$ be an autoequivalence of the derived category of a surface $S$ and $\Psi$ be the corresponding autoequivalence on $D^b(\Hilb^{n}(S))$.
Then, the direct calculation of the categorical entropy of $\Psi$ and spectral radius implies the assertion.

The next interest of this paper is \emph{hyperk\"ahler manifolds}. Let us review the definition:
\begin{dfn}\label{definition: hyperkahler manifold}
    A (compact) hyperk\"ahler manifold $X$ is a simply connected complex K\"ahler manifold admitting an everywhere non-degenerate global holomorphic $2$-form $\omega$ such that $H^{2, 0}(X, \CB) = \CB\omega$.
\end{dfn}
As one of the three basic blocks appearing in the Beauville–Bogomolov decomposition, hyperk\"ahler manifolds are a natural subject of study. In particular, the group of autoequivalences of their derived categories is known to possess symmetries that are not found in the other types while hyperk\"ahler manifolds are conjectured not to admit spherical objects; this conjecture has been verified for $\text{K}3^{[n]}$-type ($n>1$) or OG$10$-type \cite[Theorem 1.9. and Conjecture A.7.]{beckmann_2025_atomic_objects_on_hyperkahler_manifolds}.
A typical non-standard autoequivalence on hyperk\"ahler manifolds is the \emph{$\PP{n}$-twist}.
The entropy of $\PP{n}$-twist $P$ have already been calculated in \cite[Theorem 2.13.]{preprint_yuwei_2025_on_entropy_of_mathbbptwists}:
\[
\hcat(P) = 0 = \log\rho(\numg{P}).
\]

Despite this vanishing of entropy, we show that $\PP{n}$-twist produces autoequivalences with positive categorical entropy and unipotent action on the cohomology ring.
Our second theorem states as follows:
\begin{introthm}[\cref{theorem: GY property of hyperkahler manifold}]\label{theorem in intro: GY property for HK}
    Let $X$ be a projective hyperk\"ahler manifold.
    Then, $D^b(X)$ admits an autoequivalence with positive categorical entropy.
    Moreover, no hyperk\"ahler manifold $X$ satisfies the \ref{property: Gromov-Yomdin property} property.
\end{introthm}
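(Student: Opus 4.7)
The plan is to exhibit, on any projective hyperk\"ahler manifold $X$ of dimension $2n$, a single autoequivalence $\Phi\in\Auteq(X)$ whose induced action $\numg{\Phi}$ on $\bigoplus_{p}H^{p,p}(X,\CB)$ is unipotent while $\hcat(\Phi)>0$. Since unipotence forces $\rho(\numg{\Phi})=1$, and hence $\log\rho(\numg{\Phi})=0$, such a $\Phi$ simultaneously establishes the existence of an autoequivalence of positive categorical entropy and the failure of \ref{property: Gromov-Yomdin property}.

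The basic building block is a $\PP{n}$-twist. The structure sheaf $\OC_X$ of any $2n$-dimensional hyperk\"ahler manifold satisfies $\Ext^{*}(\OC_X,\OC_X)\cong\CB[x]/(x^{n+1})$ with $\deg x=2$ (because the odd part of $H^{*}(X,\OC_X)$ vanishes and the even part is generated by the powers of the symplectic form), together with $\OC_X\otimes\omega_X\cong\OC_X$; thus $\OC_X$ is a $\PP{n}$-object and yields a $\PP{n}$-twist $P\in\Auteq(X)$. A direct inspection of the formula for the cohomological action of a $\PP{n}$-twist shows that $\numg{P}$ is in fact unipotent, not merely of spectral radius one. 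However, \cite[Theorem~2.13.]{preprint_yuwei_2025_on_entropy_of_mathbbptwists} gives $\hcat(P)=0$, so $P$ alone is not enough.

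I would then form $\Phi\coloneqq T_L\circ P$, where $T_L=(-)\otimes L$ for a suitably chosen line bundle $L$ on $X$. The action of $T_L$ on cohomology is multiplication by $e^{c_{1}(L)}$, which is unipotent because $c_{1}(L)\in H^{2}(X,\CB)$ is nilpotent in the cohomology ring. Both factors are upper unitriangular with respect to the filtration of $H^{*}(X,\CB)$ by cohomological degree, so the composite $\numg{\Phi}$ is again unipotent. For the lower bound $\hcat(\Phi)>0$, I would apply the generator formula
\[
\hcat(\Phi)=\lim_{N\to\infty}\frac{1}{N}\log\sum_{k}\dim\Hom\left(G,\Phi^{N}G[k]\right)
\]
of Dimitrov--Haiden--Katzarkov--Kontsevich \cite{dimitrov_haiden_katzarkov_kontsevich_2014_dynamical_systems_and_categories} to a split generator $G$, and unwind the iterated defining triangles of $P$, combined with Serre vanishing for large powers of $L$, to force exponential growth of these Hom dimensions in $N$. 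The structural template is parallel to Ouchi's analysis of iterated spherical twists on K3 surfaces \cite{ouchi_2020_on_entropy_of_spherical_twists}, with the $\PP{n}$-triangle replacing the spherical triangle.

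The main obstacle is the lower bound on $\hcat(\Phi)$. Since both $P$ and the standard autoequivalence $T_L$ have vanishing categorical entropy individually, positivity can only come from their interaction, and isolating this interaction requires choosing a test object adapted to the $\PP{n}$-structure of $\OC_X$ and carefully tracking how many copies of $\OC_X$, in various degrees of shift and $L$-twist, survive cancellation in the iterated cones $\Phi^{N}G$. A secondary technical point is verifying in detail that the product of the two unipotent actions on $H^{*}(X,\CB)$ is again unipotent; the filtration-by-degree argument above appears to suffice, but will need to be checked against the explicit cohomological formula for $P$.
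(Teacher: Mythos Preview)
Your proposal is correct and follows essentially the same approach as the paper. The paper takes $\Phi = P_{\OC_X}\circ(-\otimes\OC(-H))$ with $H$ very ample, which differs from your $T_L\circ P$ only by the order of composition (irrelevant for entropy and spectral radius) and by naming the specific line bundle; for the unipotence, the paper invokes the slightly stronger fact that $\numg{P}$ is the \emph{identity} on cohomology \cite[Remark~2.4]{huybrechts_thomas_2006_bbb_pobjects_and_autoequivalences_of_derived_categories}, so no filtration argument is needed. The ``main obstacle'' you flag is exactly where the paper does the work: it tracks the top nonvanishing cohomology of $\Phi^m(\OC(-k))\otimes\OC(-l)$ through the iterated $\PP{n}$-twist triangles and shows it equals $d_{k+1}d_l d_1^{\,m-1}$ with $d_i=h^0(\OC(i))$, giving $\hcat(\Phi)\ge\log d_1>0$.
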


We will see that the autoequivalence in the theorem is given by the composition of the $\PP{n}$-twist associated to a $\PP{n}$-object $\OO_X$ and tensoring with the inverse of a very ample line bundle.

\begin{rem}
     We recall that for a hyperkähler manifold $X$ with small Picard number, the automorphism group $\Aut(X)$ is often dynamically trivial, meaning that $\htop(f) = 0$ for all $f \in \Aut(X)$ (see \cite{oguiso_2007_automorphisms_of_hyperkahler_manifolds_in_the_view_of_topological_entropy} for instance). 
    While it is already well-known that the group of autoequivalences is larger than $\mathrm{Aut}(X)$, our result implies that this extension is non-trivial not only algebraically but also dynamically.
    More precisely, although $\htop(f)=0$ for every $f\in\Aut(X)$, there exist $\Phi\in\mathrm{Auteq}(X)$ with $h_{\mathrm{cat}}(\Phi)>0$.  Hence the enlargement $\Aut(X)\subset\mathrm{Auteq}(X)$ yields genuinely new dynamical phenomena.

    Regarding to the (GY) property, as it is already known to hold for some cases on Abelian varieties and to fail for certain strict Calabi-Yau manifolds, hyperk\"ahler manifolds have been the last missing block in the Beauville-Bogomolov decomposition. 
    The theorem fills this gap by providing the first counterexamples in the hyperk\"ahler setting.
\end{rem}

In the previous work of the author \cite{preprint_tomoki_2025_a_note_on_categorical_entropy_of_bielliptic_surfaces_and_enriques_surfaces}, we showed that no Enriques surface satisfies (GY) by applying descent argument from the covering K3 surface.
We can extend the argument to the case of \emph{Enriques manifolds}, introduced by Oguiso and Schr\"oer \cite{oguiso_schroer_2011_enriques_manifolds} (see \cref{definition: Enriques manifolds} for the precise definition).
\begin{introthm}[\cref{theorem: pos entr and counteg of GY for Enriques manifolds}]\label{theorem in intro: GY property for Enriques manifolds}
    Let $Y$ be an Enriques manifold.
    Then, $D^b(Y)$ has an autoequivalence with positive categorical entropy.
    Moreover, no Enriques manifold $Y$ satisfies the \ref{property: Gromov-Yomdin property} property.
\end{introthm}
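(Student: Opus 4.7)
The plan is to carry over the author's descent strategy from Enriques surfaces to the higher–dimensional setting, using the universal cover of an Enriques manifold and the hyperk\"ahler autoequivalence furnished by \cref{theorem in intro: GY property for HK}.

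By the structure theorem of Oguiso–Schr\"oer, every Enriques manifold $Y$ admits a finite \'etale Galois cover $\pi\colon X\to Y$ with $X$ a projective hyperk\"ahler manifold and deck group $G=\Deck(X/Y)$ acting freely. I would fix once and for all a very ample line bundle $M$ on $Y$ (replacing it by a sufficiently divisible power if needed) and set $L\coloneqq\pi^{*}M$, which is $G$-invariant and very ample on $X$. Under the equivalence $D^{b}(Y)\simeq D^{b}(X)^{G}$, every $G$-equivariant autoequivalence of $D^{b}(X)$ descends to an autoequivalence of $D^{b}(Y)$, and this is the mechanism I would use to transport the hyperk\"ahler counterexample down to $Y$.

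Let $P=P_{\OO_X}$ be the $\PP{n}$-twist associated to the structure sheaf, and consider
\[
\Phi\coloneqq P\circ\bigl(-\otimes L^{-1}\bigr)\in\Auteq D^{b}(X).
\]
This is exactly the autoequivalence produced in the proof of \cref{theorem in intro: GY property for HK}, so $\hcat(\Phi)>0$ while $\Phi^{N}$ is unipotent on $H^{*}(X,\CB)$. Since $\OO_X$ is tautologically $G$-equivariant and $L$ is $G$-invariant, $\Phi$ is $G$-equivariant, hence gives rise to an autoequivalence $\Psi\in\Auteq D^{b}(Y)$ whose pullback along the forgetful functor $\For\colon D^{b}(Y)\to D^{b}(X)$ intertwines with $\Phi$, i.e.\ $\For\circ\Psi\cong\Phi\circ\For$.

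It remains to transfer both the positivity and the unipotency across $\pi$. For the entropy I would invoke (or prove, if not already in the body of the paper) the standard lemma that for a finite \'etale Galois cover, $\hcat(\Psi)=\hcat(\Phi)$; the forgetful functor sends a split generator of $D^{b}(Y)$ to a split generator of $D^{b}(X)$, and the intertwining relation $\For\circ\Psi^{k}\cong\Phi^{k}\circ\For$ propagates the exponential growth rate in both directions. For the cohomological action, I would use that $\pi^{*}\colon H^{*}(Y,\CB)\hookrightarrow H^{*}(X,\CB)^{G}$ is injective and $\Phi^{*}$-equivariant, so $\Psi^{*}$ is (conjugate to) the restriction of $\Phi^{*}$ to a $\Phi^{*}$-stable subspace. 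Unipotency is inherited by subspaces, yielding $\rho(\numg{\Psi})=1$ and hence $\log\rho(\numg{\Psi})=0<\hcat(\Psi)$, which simultaneously gives the positive-entropy statement and the failure of \ref{property: Gromov-Yomdin property}.

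The main obstacle I anticipate is the clean formulation of the entropy descent under equivariantization: one must check that the relevant categorical complexity function $\delta_{t}(E,\Phi^{k}E)$ on $D^{b}(X)$ is comparable (up to a bounded multiplicative factor depending only on $|G|$) to the corresponding quantity on $D^{b}(Y)$, so that the $\limsup$ defining $\hcat$ is preserved. This is exactly the technical point isolated in the author's Enriques–surface paper, and I expect the proof to carry over verbatim once $D^{b}(Y)\simeq D^{b}(X)^{G}$ and the intertwining $\For\circ\Psi\cong\Phi\circ\For$ are in place; the rest of the argument above is then purely formal.
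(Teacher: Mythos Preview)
Your proposal is correct and follows essentially the same route as the paper: build the $G$-equivariant autoequivalence $\Phi=P_{\OO_X}\circ(-\otimes L^{-1})$ on the hyperk\"ahler cover, descend it to $\Psi$ via $D^b(Y)\simeq D^b(X)^G$, then transport positive entropy using the entropy-preservation result for canonical covers (the paper cites this as \cref{proposition: entropy coincide for descent and lift} from the author's earlier work) and bound $\rho(\numg{\Psi})$ via the injection $\pi^*$. The only cosmetic differences are that the paper works with $N(-)_\RB$ rather than $H^*(-,\CB)$ for the spectral-radius step, and verifies $G$-equivariance of $P_{\OO_X}$ explicitly via the conjugation formula $\phi\circ P_{\OO_X}\cong P_{\phi(\OO_X)}\circ\phi$.
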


\begin{Out}
\cref{section: preliminaries cat entropy,section: non-standard auto of der cats} recall the definitions and preliminary results needed for the rest of the paper, with particular emphasis on categorical entropy and on autoequivalences of derived categories of coherent sheaves.
In \cref{section: categorical entropies on hilb}, we prove \cref{theorem in intro: GY property for Hilb}.
For this purpose, we use the framework of equivariant categories. \cref{subsection: equivariant categories} is devoted to a review of equivariant categories, and the proof of \cref{theorem in intro: GY property for Hilb} is given in \cref{subsection: hilb^n cat entro}.

\cref{theorem in intro: GY property for HK} will be shown in \cref{section: categorical entropy on hyperkahler manifold}. We give an explicit example that violates the (GY) property for any hyperk\"ahler manifold. 
We prove this by obtaining a lower bound for the categorical entropy in that example via computations of the dimensions of the cohomologies.
As an application of this, the following \cref{section: Enriques manifolds} shows \cref{theorem in intro: GY property for Enriques manifolds} by using the technique in \cite{preprint_tomoki_2025_a_note_on_categorical_entropy_of_bielliptic_surfaces_and_enriques_surfaces}  that relates categorical entropy on quotients to that on its canonical covering.
\end{Out}

\begin{NoCon}
The conventions and notations used in this paper are listed below:
\begin{itemize}
    \item all varieties are smooth projective varieties defined over $\CB$, 
    \item $h^i(E) \coloneqq \dim H^i(E)$ for $E\in D^b(X)$,
    \item $\ext^i(E, F)\coloneqq \dim\Ext^{i}(E, F)  = \dim\Hom(E, F[i]) =\hom^i(E, F)$ for $E, F\in D^b(X)$, and
    \item all functors are derived unless otherwise stated. However, the symbols $\lderived$ and $\rderived$ are occasionally used to emphasize that the functor is actually derived.
\end{itemize}
\end{NoCon}

\begin{Ack}
    I would like to thank Professors Yasunari Nagai and Genki Ouchi and Dr. Hayato Arai for their useful comments and conversations.
\end{Ack}

\section{Entropies of Exact Endofunctors}\label{section: preliminaries cat entropy}

To see the definition of categorical entropy, we review the notion of classical generator of a triangulated category.

\begin{dfn}\label{definition: thick closure and classical generator}
    Let $\DC$ be a triangulated category and $G$ an object in $\DC$.
    The \emph{thick closure} of $G$ is the smallest triangulated subcategory that contains $G$ and is closed under taking direct summands.
    We denote it by $\thcl{G}$.

    An object $G$ is called a \emph{classical generator} if $\thcl{G} = \DC$ holds.
\end{dfn}

    As shown by Orlov, the generator always exists in our setting:

\begin{thm}[\cite{orlov_2009_remarks_on_generators_and_dimensions_of_triangulated_categories}]
\label{theorem: existence of generator}
    Let $\OO(1)$ be a very ample line bundle on a smooth projective variety $X$.
    Then, the object
    \[
    \bigoplus_{i=0}^{\dim X}\OO(i+k)
    \]
    is a classical generator of $D^b(X)$ for any $k\in\ZZ$.
\end{thm}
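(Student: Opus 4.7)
The plan is to follow Orlov's approach, which constructs a truncated resolution of the diagonal on $X\times X$ and then applies a Fourier--Mukai transform. Two elementary reductions come first. Since $-\otimes \OO(m)$ is an autoequivalence of $D^b(X)$ that permutes classical generators, the statement for one value of $k$ implies it for all $k$; I therefore fix $k=0$. Since every object of $D^b(X)$ is an iterated mapping cone of its cohomology sheaves (smart truncations), $D^b(X)=\thcl{\Coh(X)}$, so it suffices to show that every $F\in\Coh(X)$ lies in $\thcl{\OO,\OO(1),\ldots,\OO(d)}$, where $d=\dim X$.

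The technical heart is to build a bounded complex $\KC^\bullet$ on $X\times X$ representing $\OO_{\Delta_X}$, whose terms have the form $\FC_i\boxtimes \OO(-i)$ for $i=0,1,\ldots,d$ and suitable $\FC_i\in\Coh(X)$. I would start from the Beilinson resolution of $\OO_{\Delta_{\PP{N}}}$ on $\PP{N}\times\PP{N}$ (via the embedding $X\hookrightarrow \PP{N}$ from $|\OO(1)|$) and, using Castelnuovo--Mumford regularity on $X$ together with the Grothendieck vanishing $H^i(X,-)=0$ for $i>d$, compress it down to $d+1$ consecutive twists on $X$. Given such a kernel, the Fourier--Mukai transform yields
$$
F\simeq \Phi_{\KC^\bullet}(F)\simeq \pi_{2*}\bigl(\pi_1^*F\otimes \KC^\bullet\bigr),
$$
which by the projection formula is an iterated mapping cone of terms $\rderived\Gamma(X,F\otimes \FC_i)\otimes \OO(-i)$, each a direct sum of shifts of $\OO(-i)$ for $i=0,\ldots,d$. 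Hence $D^b(X)=\thcl{\OO,\OO(-1),\ldots,\OO(-d)}$, and applying the autoequivalence $-\otimes\OO(d)$, which permutes thick closures, gives $D^b(X)=\thcl{\OO(d),\OO(d-1),\ldots,\OO}=\thcl{\OO,\OO(1),\ldots,\OO(d)}$, as desired.

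The hard part is the length-$(d+1)$ compression of the diagonal kernel. Naive restriction of the Beilinson resolution on $\PP{N}\times\PP{N}$ to $X\times X$ picks up higher Tor contributions and a priori produces $N+1$ terms rather than $d+1$; showing that the extra terms can be absorbed so that only $d+1$ consecutive twists $\OO(-i)$ actually appear is the delicate cohomological step that yields Orlov's sharp bound $\dim X+1$, and it is where the smoothness hypothesis on $X$ (ensuring finite projective dimension) enters crucially.
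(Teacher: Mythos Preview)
The paper does not give its own proof of this statement; it simply cites Orlov's result and uses it as a black box. Your outline follows the essential strategy of Orlov's original argument (a Beilinson-type resolution of the diagonal on $X\times X$, followed by a Fourier--Mukai transform), and you correctly flag the compression of the kernel to length $\dim X+1$ as the delicate cohomological step. So there is nothing to compare against in the present paper, and your sketch is consistent with the referenced proof, though the compression step you describe as ``hard'' would indeed need to be carried out in full to make the argument self-contained.
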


Next, we review the definition of \emph{categorical entropy} of an exact functor by \cite{dimitrov_haiden_katzarkov_kontsevich_2014_dynamical_systems_and_categories}.

\begin{dfn}\label{definition: entropy}
    Let $\DC$ be a triangulated category, $G$ and $G'$ generators of $\DC$, $\Phi$ an exact endofunctor of $\DC$, $E\in\DC$, and $t\in\RB$.
    The \emph{complexity $\delta_t(E, G)$ of $E$ (with respect to $G$)} is defined to be
    \[
        \delta_t(E, G)\coloneqq \inf\left\{ \sum_{i=1}^k e^{-n_it}\ \  \middle|\ \ 
\begin{xy}
(0,5) *{0}="0", (20,5)*{E_{1}}="1", (30,5)*{\dots}, (40,5)*{E_{k-1}}="k-1", (60,5)*{E\oplus E'}="k",
(10,-5)*{G[n_{1}]}="n1", (50,-5)*{G[n_{k}]}="nk",
\ar "0"; "1"
\ar "1"; "n1"
\ar@{.>} "n1";"0"
\ar "k-1"; "k" 
\ar "k"; "nk"
\ar@{.>} "nk";"k-1"
\end{xy}
        \right\}.
    \]

    The \emph{entropy of $\Phi$} is defined to be 
    \[
    h_t(\Phi) \coloneqq \lim_{n\to \infty} \frac{1}{n}\log\delta_t(G, \Phi^n(G')).
    \]
    Moreover, $\hcat(\Phi)\coloneqq h_0(\Phi)$ is called the \emph{categorical entropy} of $\Phi$.
\end{dfn}

\begin{thm}[\cite{dimitrov_haiden_katzarkov_kontsevich_2014_dynamical_systems_and_categories}]\label{theorem: categorical entropy independent and finite}
    With the setting above, $h_t(\Phi) \in[-\infty, \infty )$, and the entropy is independent of the choice of generators $G$ and $G'$. Moreover, $\hcat(\Phi)\in [0, \infty)$.
\end{thm}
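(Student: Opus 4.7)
The plan is to establish the theorem through three formal properties of the complexity $\delta_t$, combined with Fekete's subadditivity lemma and a sandwich argument. First I would verify, for any objects $E, F, G$ of a triangulated category and any exact endofunctor $\Psi$: the triangle inequality $\delta_t(E, G)\leq \delta_t(E, F)\cdot\delta_t(F, G)$, obtained by splicing a filtration of $E\oplus E'$ by shifts of $F$ with filtrations of the individual $F[n_i]$ by shifts of $G$; the shift formula $\delta_t(E[k], G)=e^{-kt}\delta_t(E, G)$, obtained by shifting an entire filtration; and the functoriality bound $\delta_t(\Psi E, \Psi G)\leq \delta_t(E, G)$, which holds because exact functors carry distinguished triangles to distinguished triangles and therefore transport filtrations.

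With these properties in hand, I would next establish finiteness and convergence in the special case $G'=G$. Setting $a_n\coloneqq \log\delta_t(G, \Phi^n G)$, the triangle inequality with intermediate object $\Phi^n G$ composed with functoriality yields
\[
a_{n+m}\leq \log\delta_t(G, \Phi^n G)+\log\delta_t(\Phi^n G, \Phi^n(\Phi^m G))\leq a_n+a_m.
\]
Since $G$ is a classical generator, each $\Phi^n G$ lies in its thick closure, so $a_n\in[-\infty, \infty)$. Fekete's lemma then yields
\[
h_t(\Phi)=\lim_{n\to\infty}\frac{a_n}{n}=\inf_{n\geq 1}\frac{a_n}{n}\in[-\infty, \infty).
\]
For arbitrary generators $G$ and $G'$ I would then sandwich
\[
\delta_t(G, \Phi^n G')\leq \delta_t(G, \Phi^n G)\cdot\delta_t(\Phi^n G, \Phi^n G')\leq \delta_t(G, \Phi^n G)\cdot\delta_t(G, G'),
\]
using functoriality in the last step. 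The constant $\delta_t(G, G')$ is finite since both objects generate the category, and the symmetric inequality is obtained by reversing the roles. Taking logarithms, dividing by $n$, and passing to the limit absorbs the constant factors and shows the limit is independent of the choice of $G$ and $G'$.

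Finally, the nonnegativity $\hcat(\Phi)\geq 0$ reduces to the observation that at $t=0$ every weight $e^{-n_i t}$ equals $1$, so $\delta_0(E, G)$ is simply the minimum number of shifts of $G$ appearing in any filtration realizing $E$ as a direct summand; this integer is at least $1$ whenever $E\neq 0$. Hence $\delta_0(G, \Phi^n G)\geq 1$ for every $n$, forcing $\hcat(\Phi)\geq 0$.

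I expect the main technical nuisance to be the bookkeeping of the direct-summand slack $E\oplus E'$ built into \cref{definition: entropy}: when splicing filtrations to obtain the triangle inequality, the auxiliary summands produced by the two input filtrations need not align, so one must repeatedly enlarge $E'$ and check that the infimum defining $\delta_t$ is unaffected by such enlargements. All remaining steps are soft and formal, relying only on the axioms of triangulated categories and the elementary Fekete lemma.
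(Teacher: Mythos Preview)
The paper does not supply a proof of this statement; it is quoted from \cite{dimitrov_haiden_katzarkov_kontsevich_2014_dynamical_systems_and_categories} without argument. Your outline is exactly the standard proof given there: the submultiplicativity, shift, and functoriality properties of $\delta_t$, Fekete's lemma for convergence, and the sandwich estimate for independence of generators.

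One small point to watch. With the convention you adopt from \cref{definition: entropy}, namely that $\delta_t(E,G)$ is the cost of building $E$ out of shifts of $G$, finiteness of $a_n=\log\delta_t(G,\Phi^nG)$ requires $G\in\thcl{\Phi^nG}$, i.e.\ that $\Phi^nG$ generate. Your sentence ``each $\Phi^nG$ lies in its thick closure'' establishes the opposite inclusion and would only bound $\delta_t(\Phi^nG,G)$. This is immaterial when $\Phi$ is an autoequivalence, which is the only case the paper actually uses; and in any event the paper's formula $h_t(\Phi)=\lim\frac{1}{n}\log\delta_t(G,\Phi^nG')$ is clearly intended in the DHKK argument order (build $\Phi^nG'$ from $G$), as one sees by comparing with $\delta'$ in \cref{theorem: simple form of entropy for ext finite}. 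Under that reading your justification is correct as written, so the mismatch is the paper's notational inconsistency in \cref{definition: entropy} rather than a flaw in your argument.
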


\begin{thm}\label{theorem: simple form of entropy for ext finite}
    Assume that $\DC$ is an $\Ext$-finite triangulated category.
    Let $E, F\in\DC$, $t\in\RB$, and
    \[
    \delta'(E, F) \coloneqq \sum_{k\in\ZZ}\ext^k(E, F)e^{-kt}.
    \]
    Then, for an exact endofunctor $\Phi$, the following holds:
    \[
    h_t(\Phi) = \lim_{n\to \infty} \frac{1}{n}\log \delta'(G, \Phi^n(G')), 
    \]
    where $G$ and $G'$ are generators of $\DC$.
\end{thm}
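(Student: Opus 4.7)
The plan is to sandwich $\delta_t(G, \Phi^n(G'))$ and $\delta'(G, \Phi^n(G'))$ between multiplicative constants independent of $n$: concretely, to establish constants $C_1, C_2 > 0$ depending only on $G, G'$ such that
\[
C_1^{-1} \cdot \delta'(G, \Phi^n(G')) \le \delta_t(G, \Phi^n(G')) \le C_2 \cdot \delta'(G, \Phi^n(G'))
\]
for all $n \ge 0$. Taking $\lim_n \frac{1}{n}\log(-)$ of each term makes the constants vanish and yields $h_t(\Phi) = \lim_n \frac{1}{n}\log\delta'(G, \Phi^n(G'))$; by \cref{theorem: categorical entropy independent and finite}, $h_t(\Phi)$ is unchanged by any convenient choice of $G, G'$.

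For the lower bound $\delta' \le C_1 \cdot \delta_t$ (the easy direction), I would fix an arbitrary Postnikov tower $0 = E_0 \to E_1 \to \cdots \to E_k = G \oplus G_0$ whose cones are $\Phi^n(G')[n_i]$, apply the contravariant functor $\Hom(-, \Phi^n(G')[j])$ to each triangle, and chain the resulting long exact sequences of $\Ext$'s to deduce inductively
\[
\ext^j(G, \Phi^n(G')) \le \sum_{i=1}^{k} \ext^{j-n_i}(\Phi^n(G'), \Phi^n(G')) = \sum_{i=1}^{k} \ext^{j-n_i}(G', G'),
\]
using that the autoequivalence $\Phi$ preserves $\Hom$-dimensions. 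Multiplying by $e^{-jt}$, summing over $j$, and reindexing gives $\delta'(G, \Phi^n(G')) \le \delta'(G', G') \cdot \sum_i e^{-n_i t}$; passing to the infimum over towers produces the inequality with $C_1 := \delta'(G', G')$, finite by Ext-finiteness and manifestly independent of $n$.

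The upper bound $\delta_t \le C_2 \cdot \delta'$ is the main obstacle. Here one must explicitly construct a Postnikov tower for $G$ (up to summand) built from shifts of $F := \Phi^n(G')$ whose total weight is bounded by a constant multiple of $\delta'(G, F)$. A natural strategy is an iterative Yoneda-type construction: set $G_0 = G$ and iteratively form triangles $G_i \to \bigoplus_k F[k]^{\oplus m_{k,i}} \to G_{i+1}$ with multiplicities $m_{k,i}$ prescribed by $\ext^k(G_i, F)$, extracting the shifts of $F$ into the tower and passing to $G_{i+1}$. By choosing $G'$ to be a strong generator of finite generation time --- e.g., $\bigoplus_{i=0}^{\dim X} \OO(i)$ from \cref{theorem: existence of generator} --- its image $F = \Phi^n(G')$ under the autoequivalence has the same finite generation time, so the iteration terminates after at most $N$ steps with $N$ independent of $n$. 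An inductive long-exact-sequence bookkeeping analogous to the easy direction bounds each intermediate $\delta'(G_i, F)$ by a geometric factor times $\delta'(G, F)$, and summing the $N$ contributions yields the desired estimate. The principal delicacy is ensuring that the chosen direction of the universal morphism produces weight estimates genuinely involving $\ext^k(G, F)$ (as in $\delta'(G, F)$) rather than $\ext^k(F, G)$; this, together with the uniform bound on $N$, is precisely where Ext-finiteness and equivalence-invariance of generation time are used essentially.
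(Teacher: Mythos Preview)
The paper does not give a proof of this statement: it appears in the preliminaries (\cref{section: preliminaries cat entropy}) as a known result, quoted without argument from the literature originating in \cite{dimitrov_haiden_katzarkov_kontsevich_2014_dynamical_systems_and_categories}. There is therefore no ``paper's own proof'' against which to compare your attempt.

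That said, your sandwiching strategy is precisely the standard one, and for autoequivalences your outline is sound. Two comments are worth making. First, both halves of your argument tacitly assume that $\Phi$ is an autoequivalence: the constant $C_1=\delta'(G',G')$ comes from the identification $\delta'(\Phi^n G',\Phi^n G')=\delta'(G',G')$, and your upper bound relies on $\Phi^n(G')$ inheriting the finite generation time of $G'$. The theorem as printed says ``exact endofunctor''; however, with the paper's convention for $\delta_t$ (the \emph{first} argument is the object being built from shifts of the second), already the definition of $h_t(\Phi)$ presupposes that $\Phi^n(G')$ generates, so in the context of this paper nothing is lost. Second, in your upper-bound step the triangles $G_i\to\bigoplus_k F[k]^{\oplus m_{k,i}}\to G_{i+1}$ point in the co-ghost direction; what terminates after boundedly many steps is the composite of the connecting maps $G_{i+1}[-1]\to G_i$ (these are the $F$-co-ghosts), yielding $G$ as a summand of an object built from shifts of $F$ rather than $G_N=0$ outright. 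The recursive estimate you allude to is $\delta'(G_{i+1},F)\le (e^{-t}+\delta'(F,F))\,\delta'(G_i,F)$, obtained by applying $\Hom(-,F[j])$ to the triangle and summing; together with the bounded number of steps this produces $C_2$ as a fixed power of $e^{|t|}+\delta'(G',G')$. These are routine details to fill in.
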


For an exact functor $\Phi: D^b(X) \to D^b(X)$, we denote the induced linear map on the numerical Grothendieck group by $\numg{\Phi}: N(X)_{\CB} \to N(X)_{\CB}$.

\section{Non-Standard Autoequivalences of Derived Categories}\label{section: non-standard auto of der cats}

\begin{dfn}
    Let $E$ be an object in a triangulated category $\DC$ with the Serre functor $S: \DC\to \DC$.
    The object $E$ is called \emph{$d$-spherical} if $S(E)\cong E[d]$ and

    \begin{align*}
        \hom^i(E, E) = 
        \begin{cases}
            \CB & i=0, d    \\
            0 & \text{otherwise}.
        \end{cases}
    \end{align*}
    For the derived category $D^b(X)$ of a $d$-dimensional variety $X$, $d$-spherical object $E\in D^b(X)$ is simply called spherical.

    For a spherical object $E\in \DC$, it induces an autoequivalence, which is called \emph{spherical twist} (\cite{seidel_thomas_2001_braid_group_actions_on_derived_categories_of_coherent_sheaves}) and is denoted by $T_E$.
    We only review the correspondence of objects here:
    \[
        T_E: F \mapsto \Cone\big(\RHom(E, F)\otimes E \stackrel{\mathrm{ev}}{\longrightarrow} F\big).
    \]
 \end{dfn}

\begin{eg}
The basic examples of spherical objects are below:
\begin{itemize}
    \item The structure sheaf $\OO_S$ on a K3 surface $S$ is a spherical object.
    \item Let $C$ be a $(-2)$-curve on a smooth projective surface $S$. The torsion sheaf $\OO_C$ is a spherical object in $D^b(S)$.
    \item For a (strict) Calabi-Yau manifold, the structure sheaf is spherical.
\end{itemize}
\end{eg}

\begin{dfn}
    Let $F$ be an object in a triangulated category $\DC$ with the Serre functor $S: \DC\to \DC$.
    The object $F$ is called a $\PP{n}$\emph{-object} if $S(F)\cong F[2n]$ and

    \begin{align*}
        \hom^i(F, F) = 
        \begin{cases}
            \CB & i=0, 2, \ldots, 2n-2, 2n    \\
            0 & \text{otherwise}.
        \end{cases}
    \end{align*}

    A $\PP{n}$-object $F\in\DC$ induces an autoequivalence, which is called \emph{$\PP{n}$-twist} and denoted by $P_F$. 
    Let $h\in\Hom(F[-2], F) (\cong\Hom(F, F[2]))$ be a generator and $h\dual\in\Hom(F\dual, F\dual[2])(\cong\Hom(F, F[2]))$.
    
    The correspondence of objects is given by a double cone as follows (\cite{huybrechts_thomas_2006_bbb_pobjects_and_autoequivalences_of_derived_categories}):
    \[
        P_F(G) = \Cone\bigg( \Cone\big( \Hom^{*-2}(F,G)\otimes F \stackrel{h\dual\cdot\id - \id\cdot h}{\longrightarrow} \Hom^{*}(F, G)\otimes F\big) \to G \bigg).
    \]
\end{dfn}
    For explicit constructions of these as Fourier-Mukai functors, see \cite[Section 8]{book_huybrechts_2006_fouriermukai_transforms_in_algebraic_geometry}.
\begin{eg}
The notion of $\PP{n}$-objects arises from the context of hyperk\"ahler manifolds. 
\begin{itemize}
    \item Let $X$ be a hyperk\"ahler manifold of dimension $2n$.
    The structure sheaf $\OO_X$ is a $\PP{n}$-object.

    \item Let $L\cong\PP{n}$ be a Lagrangian submanifold of a compact hyperk\"ahler manifold $X$ of dimension $2n$.
    Then, $\OO_{L}$ is a $\PP{n}$-object of $D^b(X)$.
\end{itemize}
\end{eg}

\begin{thm}[{\cite{ouchi_2020_on_entropy_of_spherical_twists}}]
    Let $S$ be a K3 surface and $H$ a very ample line bundle on $S$ such that $H^2\ge 10$.
    Then, the autoequivalence 
    \[
    \Phi \coloneqq T_{\OO_S} \circ (- \otimes \OO_S(-H))
    \]
    satisfies the inequality
    \begin{equation*}
        \hcat(\Phi) > \log\rho(\numg{\Phi})\ge0.
    \end{equation*}
    In particular, any K3 surface does not satisfy the property (GY).

\end{thm}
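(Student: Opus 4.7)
The plan is to bound the two quantities separately and compare. For the spectral radius, I would identify $N(S)_\CB$ with the algebraic Mukai lattice via the Mukai vector $v$. Under this identification, tensoring by $\OO_S(-H)$ corresponds to multiplication by $e^{-H}=(1,-H,H^2/2)$ and $T_{\OO_S}$ to the reflection in the $(-2)$-class $v(\OO_S)=(1,0,1)$; both are isometries of the Mukai pairing. The composition $\numg{\Phi}$ is therefore an explicit lattice isometry, and $\rho(\numg{\Phi})$ reduces to the largest absolute eigenvalue of a matrix whose entries depend polynomially on $H^2$. Direct computation yields a closed form for $\log\rho(\numg{\Phi})$.

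For the lower bound on $\hcat(\Phi)$, I would fix a classical generator $G$ coming from \cref{theorem: existence of generator} and apply \cref{theorem: simple form of entropy for ext finite}, which reduces the problem to estimating $\sum_k \ext^k(G,\Phi^n(G))$ from below. Iterating the defining triangle of the spherical twist,
\begin{equation*}
\RHom(\OO_S,F)\otimes\OO_S \to F \to T_{\OO_S}(F), \qquad F=\Phi^{n-1}(\OO_S)\otimes\OO_S(-H),
\end{equation*}
yields an inductive recipe for the total cohomology dimensions of $\Phi^n(\OO_S)$ whose coefficients are controlled by Riemann--Roch quantities such as $\chi(\OO_S,\OO_S(-mH))$ on a K3 surface. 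This produces an explicit growth rate for $\sum_k \ext^k(\OO_S,\Phi^n(\OO_S))$, and hence a lower bound for $\hcat(\Phi)$, in terms of the dominant eigenvalue of an explicit linear recursion.

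The hard part will be establishing the strict inequality $\hcat(\Phi)>\log\rho(\numg{\Phi})$. The Mukai vector only records $\chi(\OO_S,\Phi^n(\OO_S))=-\langle v(\OO_S),v(\Phi^n(\OO_S))\rangle$, i.e.\ an alternating sum of $\ext$-groups whose growth rate is exactly $\rho(\numg{\Phi})$, whereas $\hcat$ measures the unsigned total. I would therefore refine the analysis by separating the cohomology by parity, setting up a $2\times 2$ recursion for the pair $\bigl(\sum h^{\text{even}},\sum h^{\text{odd}}\bigr)$ extracted from the spherical triangle, and comparing its Perron root with the spectral radius of the signed recursion. The hypothesis $H^2\ge 10$ should be precisely the threshold ensuring that the cancellation between even and odd contributions in the Euler pairing is non-trivial, so that the unsigned growth strictly dominates the signed one and the desired strict inequality follows.
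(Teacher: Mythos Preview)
The paper does not give its own proof of this statement: it is quoted verbatim from Ouchi's paper and used only as background in \cref{section: non-standard auto of der cats}. So there is nothing in the present paper to compare your argument against.

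That said, your outline is essentially the strategy Ouchi actually uses. The identification of $N(S)_\CB$ with the algebraic Mukai lattice, the description of $T_{\OO_S}$ as the reflection in $(1,0,1)$ and of $(-)\otimes\OO_S(-H)$ as multiplication by $e^{-H}$, and the computation of the spectral radius from the resulting $3\times 3$ matrix are exactly how Ouchi handles the right-hand side. For the left-hand side he likewise iterates the defining triangle of the spherical twist and extracts a linear recursion governing $\sum_i \ext^i(\OO_S,\Phi^m(\OO_S))$; the comparison of the growth rate of that recursion with the characteristic polynomial of $\numg{\Phi}$ is precisely where the numerical hypothesis on $H^2$ enters.

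What you have written, however, is a plan rather than a proof. Two places need genuine work before it becomes one. First, the long exact sequences coming from the spherical triangle do not immediately give a clean two-term recursion for the pair $\bigl(\sum h^{\mathrm{even}},\sum h^{\mathrm{odd}}\bigr)$: you need vanishing statements (via Kodaira vanishing and Serre duality on a K3) to prevent the connecting maps from mixing degrees uncontrollably, and these must be checked inductively. Second, your final sentence is speculation: the condition $H^2\ge 10$ is not ``precisely the threshold'' for cancellation to be non-trivial in any a priori sense---it is the numerical condition that makes the dominant root of the unsigned recursion strictly larger than that of the signed one, and this has to be verified by an explicit comparison of two quadratic (or cubic) polynomials, not asserted. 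Until those computations are written out, the strict inequality $\hcat(\Phi)>\log\rho(\numg{\Phi})$ remains unproved.
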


On the other hand, it is shown by D.Mattei that a surface admitting a $(-2)$-curve does not satisfy the property (GY).
The precise statement is the following:

\begin{thm}[{\cite[Theorem 1.2.]{mattei_2021_categorical_vs_topological_entropy_of_autoequivalences_of_surfaces}}]
Let $S$ be a smooth projective surface, $C$ a $(-2)$-curve on $S$, and $\LC$ a line bundle on $S$.
Assume that $\deg_C(\LC\rvert_C)<0$. 
Then, the autoequivalence $\Phi\coloneqq T_{\OO_C}\circ (-\otimes \LC)$ satisfies the following inequality:
\[
    \hcat(\Phi) > \log\rho(\numg{\Phi}).
\]
\end{thm}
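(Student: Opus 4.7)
My plan follows the strategy of Ouchi's argument for K3 surfaces, adapted to the local setting around the $(-2)$-curve $C$. Set $\mathcal{M}\coloneqq \LC|_{C}$ and $d\coloneqq \deg_{C}\mathcal{M}<0$; since $C\cong\PP{1}$, this forces $H^{0}(C,\mathcal{M})=0$ and $H^{1}(C,\mathcal{M})\neq 0$. The goal is to obtain an explicit upper bound on $\log\rho(\numg{\Phi})$ together with a strictly larger lower bound on $\hcat(\Phi)$, computed via the formula $\hcat(\Phi)=\lim_{n\to\infty}\tfrac{1}{n}\log\delta'(G,\Phi^{n}(G))$ for a suitable classical generator $G$ of $D^{b}(S)$.

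First I would analyze the numerical action. On $N(S)_{\CB}$, the spherical twist $T_{\OO_{C}}$ acts as the Euler-form reflection $v\mapsto v-\chi(\OO_{C},v)[\OO_{C}]$, while $-\otimes\LC$ acts by multiplication by $\ch(\LC)$. Both preserve the Euler pairing, and $[\OO_{C}]$ is isotropic since $\chi(\OO_{C},\OO_{C})=0$ by sphericality. Restricting to the two-dimensional $\numg{\Phi}$-invariant sublattice spanned by $[\OO_{C}]$ and $\numg{\Phi}[\OO_{C}]$ reduces the spectral-radius computation to that of a concrete $2\times 2$ matrix whose entries depend only on $d$ and $C^{2}=-2$. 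A direct calculation then expresses $\log\rho(\numg{\Phi})$ as the logarithm of the larger root of an explicit quadratic in $|d|$.

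Next I would establish the lower bound on $\hcat(\Phi)$. Using the Koszul-type computation around the smooth curve $C$ with normal bundle $N_{C/S}=\OO_{\PP{1}}(-2)$, one finds
\[
\ext^{*}_{S}\bigl(\OO_{C},\OO_{C}(k)\bigr)\;=\;H^{*}(\PP{1},\OO(k))\oplus H^{*}(\PP{1},\OO(k-2))[-1].
\]
Substituting this into the defining triangle of $T_{\OO_{C}}$ and iterating the composition $\Phi=T_{\OO_{C}}\circ(-\otimes\LC)$, one obtains a linear recurrence for the total dimensions $\sum_{k}\ext^{k}_{S}(\OO_{C},\Phi^{n}(\OO_{C}))$, governed by a nonnegative $2\times 2$ matrix. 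The hypothesis $d<0$ makes this matrix strictly positive, so by Perron--Frobenius its dominant eigenvalue $\mu$ controls the growth and hence bounds $\hcat(\Phi)$ from below by $\log\mu$, after combining $\OO_{C}$ with an Orlov generator to form a full classical generator $G$.

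The main obstacle will be proving the strict inequality $\mu>\rho(\numg{\Phi})$. The categorical entropy is controlled by the unsigned sum $\sum_{k}\ext^{k}$, while $\rho(\numg{\Phi})$ is controlled by the alternating sum $\chi$, so the two quantities coincide only in the absence of cancellation between even- and odd-degree Ext contributions. The condition $d<0$ is precisely what prevents such cancellation: both parities of Ext remain nonzero at each iteration step. I would finish the proof by computing the eigenvalues of the two relevant $2\times 2$ matrices explicitly and comparing them, following the template established in \cite{ouchi_2020_on_entropy_of_spherical_twists}.
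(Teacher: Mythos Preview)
The paper does not prove this theorem. It is stated in the preliminary Section~3 as a result quoted verbatim from Mattei \cite{mattei_2021_categorical_vs_topological_entropy_of_autoequivalences_of_surfaces}, alongside the analogous cited results of Ouchi for K3 surfaces and of the author for Enriques surfaces, in order to list the known classes of surfaces failing the \ref{property: Gromov-Yomdin property} property. There is consequently no proof in the present paper against which to compare your proposal.

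Regarding the proposal itself, there is a genuine gap in the spectral-radius step. You write that you will obtain ``an explicit upper bound on $\log\rho(\numg{\Phi})$'' by restricting $\numg{\Phi}$ to the two-dimensional invariant sublattice spanned by $[\OO_{C}]$ and $\numg{\Phi}[\OO_{C}]$. But restricting a linear endomorphism to an invariant subspace yields only a \emph{lower} bound on its spectral radius: nothing prevents a larger eigenvalue from living on the rest of $N(S)_{\CB}$, which for a general surface can be large and on which both $T_{\OO_{C}}^{N}$ and multiplication by $\ch(\LC)$ act non-trivially. Since the inequality you are after requires $\rho(\numg{\Phi})$ to be bounded \emph{above}, this step as stated does not deliver what the argument needs. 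In Mattei's actual proof the numerical side is controlled globally, not by passing to a two-dimensional sublattice, and the comparison with the entropy lower bound is made against the full spectral radius.

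The entropy side of your sketch --- iterating the defining triangle of $T_{\OO_{C}}$ on $\OO_{C}$, extracting a linear recurrence for $\sum_{k}\ext^{k}(\OO_{C},\Phi^{n}\OO_{C})$, and reading off growth via a Perron--Frobenius eigenvalue --- is in the right spirit and is essentially Ouchi's template; note that you do not need to build a generator containing $\OO_{C}$, since the standard inequality $\hcat(\Phi)\ge\limsup_{n}\tfrac{1}{n}\log\sum_{k}\ext^{k}(E,\Phi^{n}F)$ holds for arbitrary objects $E,F$.
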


Also, the author showed it for Enriques surfaces:

\begin{thm}[\cite{preprint_tomoki_2025_a_note_on_categorical_entropy_of_bielliptic_surfaces_and_enriques_surfaces}]\label{theorem: Enriques surface GY property}
    Let $S$ be an Enriques surface and $\pi: X \to S$ its canonical covering.
    Ouchi's autoequivalence on $X$ descends to $D^b(S)$ and it does not satisfy the Gromov-Yomdin type equality.
    In particular, every Enriques surface admits a positive categorical entropy and does not admit the property (GY).
\end{thm}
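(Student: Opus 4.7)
My plan is to descend Ouchi's autoequivalence from the K3 cover along the Galois $\ZZ/2$-action, and then verify that descent preserves both sides of the Gromov--Yomdin comparison. Choose a very ample line bundle $H_0$ on $S$ with $H_0^2\ge 5$, and set $H:=\pi^*H_0$, so that $H^2\ge 10$ on $X$. Because the Enriques involution $\iota$ fixes $\OO_X$ and $H$, both $T_{\OO_X}$ and $-\otimes\OO_X(-H)$ carry canonical $\iota^*$-equivariant structures, hence so does Ouchi's autoequivalence $\Phi := T_{\OO_X}\circ(-\otimes\OO_X(-H))$. Via the equivalence $D^b(S)\simeq D^b(X)^{\ZZ/2}$, the functor $\Phi$ then descends to an autoequivalence $\Psi\in\Auteq(S)$ characterized by $\pi^*\Psi\cong\Phi\pi^*$.

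The second step is to prove $\hcat(\Psi)=\hcat(\Phi)$. Let $G_S$ be a classical generator of $D^b(S)$. A short argument using $\pi^*\pi_*=\id\oplus\iota^*$ shows that $\pi^*G_S$ is then a classical generator of $D^b(X)$. Writing $\pi_*\OO_X=\OO_S\oplus L$ for the $2$-torsion bundle with $L^{\otimes 2}\cong\OO_S$, adjunction and the projection formula give
\[
\delta'_X(\pi^*G_S,\,\Phi^n\pi^*G_S)\;=\;\delta'_S\bigl(G_S\oplus (G_S\otimes L^{-1}),\,\Psi^nG_S\bigr),
\]
valid for every $n$ and every $t\in\RB$. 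Dividing by $n$, letting $n\to\infty$, and invoking the independence of the categorical entropy from the choice of generators (\cref{theorem: categorical entropy independent and finite}), one obtains $h_t(\Phi)=h_t(\Psi)$ for all $t$; in particular $\hcat(\Psi)=\hcat(\Phi)>0$.

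For the spectral radii, the intertwining $\pi^*\circ\numg{\Psi}=\numg{\Phi}\circ\pi^*$ realizes $V:=\pi^*(N(S)_{\CB})\subset N(X)_{\CB}$ as a $\numg{\Phi}$-invariant subspace on which $\pi^*$ conjugates $\numg{\Psi}$ into $\numg{\Phi}|_V$. Every eigenvalue of $\numg{\Psi}$ is therefore an eigenvalue of $\numg{\Phi}$, so $\rho(\numg{\Psi})\le\rho(\numg{\Phi})$. Combined with Ouchi's strict inequality on the K3 cover, this yields
\[
\hcat(\Psi)\;=\;\hcat(\Phi)\;>\;\log\rho(\numg{\Phi})\;\ge\;\log\rho(\numg{\Psi}),
\]
which simultaneously gives positivity of the categorical entropy on $S$ and the failure of \ref{property: Gromov-Yomdin property}.

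The main obstacle is constructing the descent $\Psi$ genuinely, not just at the level of isomorphism classes. One must fix a specific $\iota$-equivariant structure on $\OO_X$ (the tautological one coming from $\OO_X=\pi^*\OO_S$) and check that the Fourier--Mukai kernel of $T_{\OO_X}$, namely the cone of $\OO_X\boxtimes\OO_X\dual\to\OO_\Delta$, inherits a compatible equivariant structure from the diagonal action on $X\times X$; the analogous check for $-\otimes\OO_X(-H)$ is automatic once $H=\pi^*H_0$. Once this equivariant lift is in place, the remaining steps reduce to routine bookkeeping with adjunction, the projection formula, and Ouchi's theorem.
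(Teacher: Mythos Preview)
Your approach is correct and matches the one the paper uses (the theorem is cited from the author's preprint, but \S\ref{section: Enriques manifolds} of the present paper reproduces the same argument for Enriques manifolds): descend $\Phi$ along the $\ZZ/2$-cover using Ploog's framework, identify $\hcat(\Psi)=\hcat(\Phi)$ via the $\pi^*\dashv\pi_*$ adjunction and the fact that $\pi^*$ of a generator is a generator (this is exactly \cref{proposition: entropy coincide for descent and lift}), and bound $\rho(\numg{\Psi})\le\rho(\numg{\Phi})$ via the $\numg{\Phi}$-invariant subspace $\pi^*N(S)_\CB$ (this is \cref{lemma: spectral radius of Psi on Enriques mfd}).

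One small technical wrinkle: pulling back a very ample line bundle along the finite \'etale map $\pi$ is ample but need not be very ample, and Ouchi's theorem as stated requires $H$ very ample on the K3 cover. You should instead choose $H_0$ on $S$ so that $H=\pi^*H_0$ is very ample on $X$ with $H^2\ge 10$; this is always possible after replacing $H_0$ by a sufficiently high power, and nothing else in your argument uses very ampleness of $H_0$ on $S$.
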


\section{Categorical Entropies on Hilbert Schemes of points on surfaces}\label{section: categorical entropies on hilb}
\subsection{Equivariant Categories}\label{subsection: equivariant categories}
We review definitions of an equivariant category and autoequivalences among them. For more details, see \cite{beckmann_oberdieck_2023_on_equivariant_derived_categories,preprint_alexey_2015_on_equivariant_triangulated_categories,ploog_2007_equivariant_autoequivalences_for_finite_group_actions} for instance.

\begin{dfn}\label{definition: G action to tri cat}
    Let $\DC$ be a triangulated category and $G$ a finite group.
    An action of $G$ on $\DC$ consists of the data $(\{\rho_g\}_{g\in G}, \{\theta_{g,h}\}_{g,h\in G})$, where $\rho_g$ is an autoequivalence of $\DC$ and $\theta_{g,h}$ is an isomorphism of functors $\theta_{g,h}: \rho_g\circ\rho_h \to \rho_{gh}$ such that, for every $g,h,i\in G$, 
    \[
    \theta_{g,hi}\circ\rho_g\theta_{h,i} = \theta_{gh, i}\circ\theta_{g,h}\rho_i.
    \]
\end{dfn}

\begin{dfn}\label{definition: equivariant category}
    Let $G\curvearrowright \DC$ be the triangulated category $\DC$ with a group action $(\rho, \theta)$ of a finite group $G$.

    The \emph{equivariant category} $\DC_G$ is defined as follows:
    \begin{itemize}
        \item an object is a pair $(E, \phi)$ of $E\in\DC$ and $\phi= \{\phi_g: E\to\rho_gE\}_{g\in G}$ such that, for every $g,h\in G$, 
        \[
        \theta_{g,h}(E)\circ\rho_g\phi_h\circ\phi_g = \phi_{gh}, 
        \]
        and
        \item a morphism $f: (E, \phi) \to (E', \phi')$ is a morphism $f\in\Hom_\DC(E, E')$ such that, for every $g\in G$, 
        \[
        \phi'_g\circ f = \rho_g f\circ\phi_g.
        \]
    \end{itemize}
\end{dfn}

Let $\DC$ be a triangulated category and $G$ a finite group with an action $G\curvearrowright \DC$.
Denote $\DC_G$ the triangulated category of $G$-equivariant objects.

\begin{dfn}
    Let $G$ be a finite group and $\DC$ a triangulated category with an action $G\curvearrowright\DC$.
    There are natural functors between $\DC$ and the equivariant category, namely, the \emph{forgetful functor} and the \emph{induction functor}:
    \begin{align*}
    \For&: \DC_G \to \DC \quad (E, \phi) \mapsto E, \\
    \Ind&: \DC \to \DC_G \quad E \mapsto \bigg(\bigoplus_{g\in G} \rho_gE, \phi = \{\phi_g\}_{g\in G}\bigg), 
    \end{align*}
    where $\phi_g : \bigoplus_{h\in G}\rho_hE \to \bigoplus_{h\in G}\rho_g\rho_hE$
    is the direct sum of isomorphisms $\theta_{g, h}^{-1}: \rho_{gh}E\to \rho_g\rho_h E$ that are given in the data of the action $G\curvearrowright\DC$.
\end{dfn}
\begin{prop}[{\cite[Lemma 3.8.]{preprint_alexey_2015_on_equivariant_triangulated_categories}}]\label{proposition: adjoint of for and inf}
    The functor $\For$ is left and right adjoint to the functor $\Ind$.
\end{prop}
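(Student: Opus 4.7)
The plan is to establish both adjunctions by exhibiting explicit, naturally defined $\Hom$-set bijections, based on the observation that an equivariant morphism out of or into an induced object is determined by its identity component. Both adjunctions reduce to dual versions of the same cocycle calculation.

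For $\For\dashv\Ind$, I would start from a morphism $f:(E,\phi)\to \Ind(F)$ and decompose it as $f=(f_h)_{h\in G}$ with $f_h:E\to \rho_h F$. Denoting by $\psi$ the equivariant structure on $\Ind(F)$, which is built component-wise from the $\theta_{g,h}^{-1}$, the equivariance condition $\psi_g\circ f=\rho_g(f)\circ\phi_g$ from \cref{definition: equivariant category} translates, summand by summand, into
\[
f_{gh}=\theta_{g,h}(F)\circ \rho_g(f_h)\circ \phi_g \quad \text{for all } g,h\in G.
\]
Setting $h=e$ shows that $f$ is recovered from the single component $f_e\in\Hom_\DC(\For(E,\phi),F)$. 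Conversely, given $\alpha\in\Hom_\DC(E,F)$, I would set $f_g\coloneqq \theta_{g,e}(F)\circ\rho_g(\alpha)\circ\phi_g$ and verify the displayed equivariance by combining the associativity axiom $\theta_{g,hi}\circ\rho_g\theta_{h,i}=\theta_{gh,i}\circ\theta_{g,h}\rho_i$ from \cref{definition: G action to tri cat} with the compatibility $\phi_{gh}=\theta_{g,h}(E)\circ\rho_g(\phi_h)\circ\phi_g$. The assignment $f\mapsto f_e$ is manifestly natural in both variables, so this produces the required adjunction.

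The adjunction $\Ind\dashv\For$ is obtained by a dual argument. A morphism $f:\Ind(F)\to(E,\phi)$ decomposes as $(f_h:\rho_h F\to E)$, and its equivariance, when read component-wise with respect to the source decomposition, takes the form
\[
\phi_g\circ f_{gh} = \rho_g(f_h)\circ \theta_{g,h}^{-1}(F).
\]
Since each $\phi_g$ is an isomorphism (as part of an equivariant structure), the family $(f_h)$ is again determined by $f_e:F\to E$, with explicit inverse
\[
\alpha\mapsto (f_g),\qquad f_g\coloneqq\phi_g^{-1}\circ\rho_g(\alpha)\circ\theta_{g,e}^{-1}(F),
\]
whose equivariance is verified by the same type of cocycle manipulation.

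I expect the main obstacle to be purely notational rather than conceptual: essentially all of the work lies in keeping the indexing of the summands of $\Ind(F)$, the action of $\psi_g$ as a permutation-plus-isomorphism, and the two cocycles (for $\theta$ and for $\phi$) aligned while performing the extension $\alpha\mapsto (f_g)$. No input beyond the axioms of a $G$-action on $\DC$ and the defining relations of an equivariant object is needed, so this is a routine but bookkeeping-heavy diagram chase.
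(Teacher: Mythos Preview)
The paper does not supply its own proof of this proposition: it is quoted as \cite[Lemma 3.8.]{preprint_alexey_2015_on_equivariant_triangulated_categories} and used as a black box. Your proposed argument via explicit $\Hom$-set bijections, reducing an equivariant morphism into or out of $\Ind(F)$ to its identity component using the cocycle relations for $\theta$ and $\phi$, is the standard proof (and is essentially what Elagin does in the cited reference), so there is nothing to compare beyond noting that your sketch is correct.
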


\begin{lem}\label{lemma: For Inf preserves the classical generator}
    The functors $\Ind$ and $\For$ preserve classical generators.
\end{lem}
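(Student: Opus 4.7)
The plan is to deduce both assertions from the biadjunction $\For\dashv\Ind\dashv\For$ of \cref{proposition: adjoint of for and inf} combined with the fact that $\lvert G\rvert$ is invertible in $\CB$. As preparation I would establish two elementary splittings: (a) for every $E\in\DC$, the object $E$ is a direct summand of $\For\Ind(E)=\bigoplus_{g\in G}\rho_g E$, via the inclusion and projection at the component $g=e$; and (b) for every $(F,\phi)\in\DC_G$, the equivariant object $(F,\phi)$ is a direct summand of $\Ind\For(F,\phi)=\Ind(F)$, obtained by composing the unit $\eta_{(F,\phi)}$ of $\For\dashv\Ind$ with the counit $\epsilon_{(F,\phi)}$ of $\Ind\dashv\For$ and rescaling by $\lvert G\rvert^{-1}\in\CB$.

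Since $\For$ and $\Ind$ are mutual adjoints they are both exact, hence preserve thick closures. Given the two splittings above, the lemma follows directly. For the induction functor, let $G_0$ be a classical generator of $\DC$ and $(F,\phi)\in\DC_G$ arbitrary. The underlying object satisfies $F\in\thcl{G_0}$, so exactness of $\Ind$ yields $\Ind(F)\in\thcl{\Ind(G_0)}$; Splitting (b) then gives $(F,\phi)\in\thcl{\Ind(G_0)}$. Symmetrically, for the forgetful functor, let $(E_0,\phi_0)$ be a classical generator of $\DC_G$ and $F\in\DC$ arbitrary. Then $\Ind(F)\in\thcl{(E_0,\phi_0)}$, so exactness of $\For$ gives $\For\Ind(F)=\bigoplus_{g\in G}\rho_g F\in\thcl{E_0}$; Splitting (a) then places $F\in\thcl{E_0}$.

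The only nontrivial ingredient is the verification in Splitting (b) that $\epsilon_{(F,\phi)}\circ\eta_{(F,\phi)}$ becomes multiplication by $\lvert G\rvert$ on $F$ after applying $\For$. Tracing the explicit forms of the unit (whose image under $\For$ is the tuple $(\phi_g)_{g\in G}\colon F\to\bigoplus_g\rho_g F$) and the counit (whose image under $\For$ sums the $\phi_g^{-1}$'s over the components) from the definitions of $\Ind$ and its adjunctions yields $\sum_{g\in G}\phi_g^{-1}\circ\phi_g=\lvert G\rvert\cdot\id_F$. I expect this to be the main technical step, but no genuine obstacle arises since everything reduces to the explicit formulas recalled in \cref{subsection: equivariant categories}.
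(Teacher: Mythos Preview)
Your proof is correct and follows essentially the same approach as the paper's: both arguments use the exactness and biadjunction of $\For$ and $\Ind$ together with the fact that every object is a direct summand of the round-trip $\Ind\circ\For$ (respectively $\For\circ\Ind$) applied to it. The only cosmetic difference is that the paper cites \cite[Lemma~3.9]{preprint_alexey_2015_on_equivariant_triangulated_categories} for your Splitting~(b) rather than verifying it directly via $\lvert G\rvert^{-1}$.
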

\begin{proof}
    We show the claim for $\Ind$. 
    Let $G$ be a classical generator of $\DC$ and $(F, \rho)$ an object in $\DC_G$.
    Then, the assumption deduces that $\For(F)\in \DC = \thcl{G}$.
    As the exact functor $\Ind$ is left adjoint (\cref{proposition: adjoint of for and inf}), we have
    \[
        \Ind\circ\For(F)\in \thcl{\Ind(G)}.
    \]
    Moreover, the functor $\Ind\circ\For$ is a splitting injection by \cite[Lemma 3.9.]{preprint_alexey_2015_on_equivariant_triangulated_categories}, 
    $F$ is a direct summand of $\Ind\circ\For(F)$.
    In particular, $F\in \thcl{\Ind(G)}$.

    The proof for the case of $\For$ is similar. Note that for this case, we used the assumption of $k=\CB$.
\end{proof}

The autoequivalences of equivariant categories are well studied in \cite{ploog_2007_equivariant_autoequivalences_for_finite_group_actions}.
We shortly review a part of this result that we will use.

\begin{prop}[{\cite[Lemma 5. (2) and Section 3.1]{ploog_2007_equivariant_autoequivalences_for_finite_group_actions}}]\label{proposition: commutativity of auto and for}
    Let $S$ be a smooth projective surface.
    Then, there is following injection:
    \[
    \Auteq (S) \hookrightarrow\Auteq(\Hilb^n(S)), 
    \]
    such that any autoequivalence in the image of this embedding commutes with the forgetful functor $\For$:
    \[
    \begin{tikzcd}
D^b(S^n)_{\Sym(n)} \arrow[r] \arrow[d, "\For"] & D^b(S^n)_{\Sym(n)} \arrow[d, "\For"] \\
D^b(S^n) \arrow[r] & D^b(S^n). 
\end{tikzcd}
    \]
\end{prop}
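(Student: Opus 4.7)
The plan is to construct the embedding via the external tensor product $\Phi\mapsto\Phi^{\boxtimes n}$ on $D^b(S^n)$, lift it to the equivariant category using Ploog's equivariant descent machinery, and finally transport the result to $\Hilb^n(S)$ through the BKR--Haiman equivalence $D^b(\Hilb^n(S))\simeq D^b(S^n)_{\Sym(n)}$.

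First, for each $\Phi\in\Auteq(S)$ the $n$-fold external product $\Phi^{\boxtimes n}$ is an autoequivalence of $D^b(S^n)$. Functoriality of $\boxtimes$ supplies, for every $\sigma\in\Sym(n)$, a canonical natural isomorphism
\[
\alpha_\sigma\colon \sigma^{*}\circ\Phi^{\boxtimes n}\xrightarrow{\ \sim\ }\Phi^{\boxtimes n}\circ\sigma^{*},
\]
arising from the symmetry of the box product. I would then verify that the family $\{\alpha_\sigma\}_{\sigma\in\Sym(n)}$ satisfies the cocycle condition in the sense of \cref{definition: G action to tri cat}, endowing $\Phi^{\boxtimes n}$ with the structure of a $\Sym(n)$-equivariant endofunctor of $D^b(S^n)$.

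Second, I would invoke \cite[Lemma 5(2)]{ploog_2007_equivariant_autoequivalences_for_finite_group_actions}: any $\Sym(n)$-equivariant autoequivalence of $D^b(S^n)$ descends canonically to an autoequivalence $\widetilde{\Phi^{\boxtimes n}}$ of the equivariant category $D^b(S^n)_{\Sym(n)}$, and the descent satisfies $\For\circ\widetilde{\Phi^{\boxtimes n}}\cong\Phi^{\boxtimes n}\circ\For$ by construction. Composing with the BKR--Haiman equivalence yields the desired element of $\Auteq(\Hilb^n(S))$ and a well-defined group homomorphism $\Auteq(S)\to\Auteq(\Hilb^n(S))$. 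For injectivity, if $\widetilde{\Phi^{\boxtimes n}}\cong\id$ on $D^b(S^n)_{\Sym(n)}$, the forgetful commuting square forces $\Phi^{\boxtimes n}\cong\id$ on $D^b(S^n)$; evaluating on objects of the form $E\boxtimes\OO_S\boxtimes\cdots\boxtimes\OO_S$ (first with $E=\OO_S$ to pin down $\Phi(\OO_S)$ up to a tensor factor, then general) and using K\"unneth-type arguments on morphism spaces then recovers $\Phi\cong\id$.

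The step requiring the most care is the coherent verification of the cocycle condition for $\{\alpha_\sigma\}$: although morally obvious from the symmetric monoidal structure of $\boxtimes$, making all pentagon/hexagon-type compatibilities precise is tedious. This is exactly what Ploog's general machinery packages, so the cleanest route is to appeal to \cite{ploog_2007_equivariant_autoequivalences_for_finite_group_actions} directly rather than redo the coherence check by hand. The remainder of the argument, including the explicit commutativity with $\For$, is then formal.
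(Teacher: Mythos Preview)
Your approach is exactly what the paper does: it states the proposition as a citation to Ploog and refers to \cref{setup: hilbert scheme part} for the construction, which is precisely $\Phi\mapsto\Phi^{\boxtimes n}$ descended to $D^b(S^n)_{\Sym(n)}$ via Ploog's machinery and then transported through the BKR--Haiman equivalence. The commutativity with $\For$ is part of Ploog's descent statement, so there is nothing to add there; your only genuine addition is the injectivity sketch, which the paper leaves entirely to the cited reference.
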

The construction of the injection is given in \cref{setup: hilbert scheme part}.

\subsection{Hilbert Schemes of points on Surfaces}\label{subsection: hilb^n cat entro}
Throughout the rest of this section, we follow the following setup and notations:
\begin{setup}\label{setup: hilbert scheme part}
    Let $S$ be a smooth surface, $S^n\coloneqq S\times\cdots\times S$, and $S^{[n]}$ the Hilbert scheme of $n$-points on $S$.
    For an equivalence $\Phi\in \Auteq S$, we write $\Phin$ as the autoequivalence of $D^b(S^n)$ that acts on each component via $\Phi$.
    Because $\Phin$ commutes with the natural action of $n$-th symmetric group $\Sym(n)$ it descends as the autoequivalence of $D^b_{\Sym(n)}(S^n)$.
    
    In \cite{ploog_2007_equivariant_autoequivalences_for_finite_group_actions}, Ploog 
    showed that $\Phin$ descends to $D^b_{\Sym(n)}(S^n) \big(\cong D^b(S^{[n]})\big)$; we denote the induced functor by $\Psi$.
    By \cite{bridgeland_king_reid_2001_the_mckay_correspondence_as_an_equivalence_of_derived_categories} and the spectral Hilbert scheme by Haiman \cite{haiman_2001_hilbert_schemes_polygraphs_and_the_macdonald_positivity_conjecture}, there is an equivalence
    \[
        \BKR: D^b_{\Sym(n)}(S^n) \stackrel{\cong}{\longrightarrow} D^b(S^{[n]}).
    \]
    Therefore, we regard $\Psi$ as an autoequivalence of $D^b(S^{[n]})$ when there is no risk of confusion.
\end{setup}
    By the remark below, the equivalence $\BKR$ does not change the spectral radius and categorical entropy.
\begin{rem}
Let us consider the following commutative diagram:
\begin{equation*}
\begin{tikzcd}
\DC \arrow[r, "\varphi"] \arrow[d, "\Phi"'] & \DC' \arrow[d, "\Psi"] \\
\DC \arrow[r, "\varphi"'] & \DC', 
\end{tikzcd}
\end{equation*}
where $\varphi$ is an equivalence.
Let $G$ be a generator of $\DC$ and $G'$ a generator of $\DC'$.
As $\varphi$ is an equivalence $\varphi^{-1}(G')$ (resp. $\varphi(G)$) is a generator of $\DC$ (resp. $\DC'$). Thus, we have

\begin{align*}
       h_t(\Psi) &=  \lim_{n\to \infty} \frac{1}{n}\log \big(\sum_{m\in\ZZ} \ext^{m}(G', \Psi^{n}(\varphi(G)))e^{-mt}\big)\\
       &= \lim_{n\to \infty} \frac{1}{n}\log \big(\sum_{m\in\ZZ} \ext^{m}(G', \varphi(\Phi^{n}(G)))e^{-mt}\big)\\
       &=\lim_{n\to \infty} \frac{1}{n}\log \big(\sum_{m\in\ZZ} \ext^{m}(\varphi^{-1}(G'), \Phi^{n}(G))e^{-mt}\big)\\
       &=h_t(\Phi).
\end{align*}
\end{rem}

\begin{prop}\label{proposition: hcat Psi = hcat n Phi}
    With the setting of \cref{setup: hilbert scheme part}, we have
    \[
    \hcat(\Psi) = n\hcat(\Phi).
    \]
\end{prop}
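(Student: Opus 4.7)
The plan is to factor the computation into two equalities,
\[
\hcat(\Psi) \;=\; \hcat(\Phi^{\boxtimes n}) \;=\; n\,\hcat(\Phi),
\]
and prove each separately. By the remark preceding the proposition, the BKR equivalence does not alter categorical entropy, so it suffices to regard $\Psi$ as an endofunctor of the equivariant category $\DC_G:=D^b(S^n)_{\Sym(n)}$, the natural setting for the forgetful/induction formalism of \cref{subsection: equivariant categories}. Let $G_0 := \bigoplus_{i=0}^{2}\OO_S(i)$ be the Orlov-type generator of $D^b(S)$ from \cref{theorem: existence of generator}. Then $G_0^{\boxtimes n}$ coincides with $\bigoplus_{(i_1,\ldots,i_n)}\OO_S(i_1)\boxtimes\cdots\boxtimes\OO_S(i_n)$, so it is a classical generator of $D^b(S^n)$; moreover it is symmetric in the sense that $\sigma^*G_0^{\boxtimes n}\cong G_0^{\boxtimes n}$ for every $\sigma\in\Sym(n)$.

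\textbf{First equality.} To compare $\hcat(\Psi)$ with $\hcat(\Phi^{\boxtimes n})$, I would take $\Ind(G_0^{\boxtimes n})$ as the generator of $\DC_G$, which is legitimate by \cref{lemma: For Inf preserves the classical generator}. The two-sided adjunction $\For\dashv\Ind\dashv\For$ of \cref{proposition: adjoint of for and inf}, combined with the commutation $\For\circ\Psi^m = (\Phi^{\boxtimes n})^m\circ\For$ from \cref{proposition: commutativity of auto and for}, together with the identification $\For\circ\Ind(G_0^{\boxtimes n}) \cong (G_0^{\boxtimes n})^{\oplus n!}$ coming from the symmetry of $G_0^{\boxtimes n}$, yields
\[
\Ext^*_{\DC_G}\bigl(\Ind(G_0^{\boxtimes n}),\,\Psi^m\Ind(G_0^{\boxtimes n})\bigr) \;\cong\; \Ext^*_{D^b(S^n)}\bigl(G_0^{\boxtimes n},\,(\Phi^{\boxtimes n})^m G_0^{\boxtimes n}\bigr)^{\oplus n!}.
\]
Applying $\tfrac{1}{m}\log$ to the graded dimension and passing to the limit through \cref{theorem: simple form of entropy for ext finite} absorbs the constant $n!$ and delivers the equality.

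\textbf{Second equality.} For $\hcat(\Phi^{\boxtimes n}) = n\,\hcat(\Phi)$, apply the K\"unneth formula on $S^n$. Since $(\Phi^{\boxtimes n})^m G_0^{\boxtimes n}\cong(\Phi^m G_0)^{\boxtimes n}$, one obtains
\[
\ext^k_{S^n}\bigl(G_0^{\boxtimes n},(\Phi^m G_0)^{\boxtimes n}\bigr) \;=\; \sum_{k_1+\cdots+k_n=k}\prod_{i=1}^n \ext^{k_i}_{S}(G_0,\Phi^m G_0),
\]
and hence $\delta'_{S^n}(G_0^{\boxtimes n},(\Phi^{\boxtimes n})^m G_0^{\boxtimes n}) = \delta'_S(G_0,\Phi^m G_0)^n$. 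Taking $\tfrac{1}{m}\log$ and letting $m\to\infty$ extracts precisely the factor of $n$.

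\textbf{Main subtleties.} The delicate point is the bookkeeping across three changes of category (BKR, $\For/\Ind$, and K\"unneth): the crucial choice is the symmetric product generator $G_0^{\boxtimes n}$, so that both the adjunction trick and the K\"unneth factorisation apply cleanly; and one must confirm that the multiplicative constants ($n!$ from the orbit size and the multinomial coefficients from K\"unneth) are bounded in $m$ and therefore disappear after $\tfrac{1}{m}\log$ and the limit. Beyond these cosmetic checks, I do not expect any substantive obstacle.
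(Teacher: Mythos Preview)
Your proposal is correct and follows essentially the same approach as the paper: both factor the equality as $\hcat(\Psi)=\hcat(\Phi^{\boxtimes n})=n\,\hcat(\Phi)$, use the K\"unneth formula for the second step, and use the $\For/\Ind$ adjunction (\cref{proposition: adjoint of for and inf}), the commutation of \cref{proposition: commutativity of auto and for}, and \cref{lemma: For Inf preserves the classical generator} for the first. The only cosmetic difference is that the paper invokes generator-independence to pass from $G'$ to $\For\Ind(G')$ for an arbitrary generator, whereas you fix the symmetric generator $G_0^{\boxtimes n}$ so that $\For\Ind(G_0^{\boxtimes n})\cong (G_0^{\boxtimes n})^{\oplus n!}$ and absorb the $n!$ in the limit; note also that no multinomial coefficients actually appear in the K\"unneth step, so that worry is moot.
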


\begin{proof}    
    First, we show $\hcat(\Phin) = n\hcat(\Phi)$.
    Let $F$ and $F'\in D^b(S)$ be classical generators.
    Note that $F^{\boxn}$ and $F^{\boxn}$ become classical generators of $D^n(S)$.
    The K\"unneth formula deduces that 
    \[
    \Ext^i\big(F^{\boxn}, (\Phin)^m((F')^{\boxn})\big) 
    \cong \bigoplus_{i_1 + \cdots i_n = i} \bigg(\bigotimes_{j=1}^{n} \Ext^{i_j}(F, \Phi^{m}(F'))\bigg).
    \]
    Therefore, we have
    \begin{align*}
        h_t(\Phin) 
        &= \lim_{m\to \infty} \frac{1}{m}\log \bigg(\sum_{k\in\ZZ} \ext^{k}\big(F^{\boxn}, (\Phin)^{m}((F')^{\boxn})\big)e^{-kt}\bigg)\\
        &=\lim_{m\to \infty} \frac{1}{m}\log \bigg( \sum_{k\in\ZZ} \big(\sum_{i_1 + \cdots +i_n = k} 
        \prod_{j=1}^{n} \ext^{i_j}(F, \Phi^{m}(F'))\big)e^{-kt}
        \bigg)\\
        &=\lim_{m\to \infty} \frac{1}{m}\log \bigg( \sum_{i_1,\ldots,i_n\in\ZZ} 
        \bigg(\prod_{j=1}^{n} \ext^{i_j}(F, \Phi^{m}(F'))e^{-i_jt}\bigg)
        \bigg)\\
        &=\lim_{m\to \infty} \frac{1}{m}\log \bigg(
        \sum_{k\in\ZZ} \ext^{k}(F, \Phi^m(F'))e^{-kt}
        \bigg)^{n}\\
        &=n\lim_{m\to \infty} \frac{1}{m}\log \big(
        \sum_{k\in\ZZ} \ext^{k}(F, \Phi^m(F'))e^{-kt}
        \big)\\
        &= n\cdot h_t(\Phi).
    \end{align*}

    Next, we see the categorical entropies $h_t(\Psi)$ and $h_t(\Phin)$ coincide.
    Let $G$ and $G'$ be classical generators of $D^b(S^n)$.
    Using the facts that the entropy is independent of the choice of generator, adjoint of $\For$ and $\Ind$, and \cref{lemma: For Inf preserves the classical generator}, we have the following equality:

    \begin{align*}
        h_t(\Phin) 
        &= \lim_{m\to \infty} \frac{1}{m}\log \big(\sum_{k\in\ZZ} \ext^{k}(G, (\Phin)^{m}(G'))e^{-kt}\big)\\
        &= \lim_{m\to \infty} \frac{1}{m}\log \big(\sum_{k\in\ZZ} \ext^{k}(G, (\Phin)^{m}\circ\For\circ\Ind (G'))e^{-kt}\big)\\
        &= \lim_{m\to \infty} \frac{1}{m}\log \big(\sum_{k\in\ZZ} \ext^{k}(G, \For\circ \Psi^{m}\circ\Ind (G'))e^{-kt}\big)\\
        &= \lim_{m\to \infty} \frac{1}{m}\log \big(\sum_{k\in\ZZ} \ext^{k}(\Ind(G), \Psi^{m}\circ\Ind (G'))e^{-kt}\big)\\
        &= h_t(\Psi).
    \end{align*}
    Combining these equations implies the statement.
    \end{proof}

    Summarizing the discussion of this section, we have the following result:
    \begin{thm}\label{theorem: positivity inherits to hilbert scheme}
    Let $S$ be a smooth projective surface that admits an autoequivalence with positive categorical entropy. Then,  $\Hilb^n(S)$ does so for any $n\in\ZZ_{\ge0}$.
    \end{thm}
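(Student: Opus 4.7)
The plan is to deduce the theorem immediately from the multiplicativity identity $\hcat(\Psi) = n\hcat(\Phi)$ established in \cref{proposition: hcat Psi = hcat n Phi}. The steps in order are the following. First, by hypothesis, fix an autoequivalence $\Phi \in \Auteq(S)$ with $\hcat(\Phi) > 0$. Second, form the external tensor autoequivalence $\Phin$ on $D^b(S^n)$ as in \cref{setup: hilbert scheme part}; since $\Phin$ acts on each factor by the same functor, it commutes with the natural $\Sym(n)$-action, so by Ploog's construction it descends to an autoequivalence of $D^b_{\Sym(n)}(S^n)$. Third, transport this autoequivalence along the BKR-Haiman equivalence $\BKR\colon D^b_{\Sym(n)}(S^n) \stackrel{\cong}{\longrightarrow} D^b(\Hilb^n(S))$ to obtain the desired $\Psi \in \Auteq(\Hilb^n(S))$.

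Applying \cref{proposition: hcat Psi = hcat n Phi} then gives
\[
\hcat(\Psi) = n\hcat(\Phi) > 0
\]
for every $n \geq 1$, which completes the proof. The degenerate case $n=0$ is vacuous, since $\Hilb^0(S)$ is a point.

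There is no further obstacle, as all of the technical content is already packaged in \cref{proposition: hcat Psi = hcat n Phi}. In particular, that proof uses the K\"unneth formula to split $\ext^{\bullet}(F^{\boxn}, (\Phin)^{m}((F')^{\boxn}))$ into a product of $\ext$-groups over $S$, thereby reducing from $\Phin$ to $\Phi$, and it uses the $\For$--$\Ind$ adjunction together with \cref{lemma: For Inf preserves the classical generator} — which allows one to replace generators of $D^b(S^n)$ by equivariant ones in the complexity computation — to identify $h_t(\Phin)$ with $h_t(\Psi)$. The theorem under discussion is simply the positivity specialization of the resulting identity.
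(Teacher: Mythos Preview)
Your proposal is correct and matches the paper's own argument exactly: the paper states the theorem immediately after \cref{proposition: hcat Psi = hcat n Phi} with the single sentence ``Summarizing the discussion of this section, we have the following result,'' so the entire content is indeed the multiplicativity identity $\hcat(\Psi)=n\hcat(\Phi)$ together with the construction in \cref{setup: hilbert scheme part}. One minor quibble: the case $n=0$ is not ``vacuous'' but rather false as written (a point admits no positive-entropy autoequivalence); this is evidently a typo in the statement, since the introduction (\cref{theorem in intro: GY property for Hilb}) correctly restricts to $n\in\ZZ_{>0}$.
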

    An analogous statement of \cref{proposition: hcat Psi = hcat n Phi} holds for the spectral radius of the induced linear action on the numerical Grothendieck group \(N(X)_\mathbb{C}\).
\begin{prop}\label{proposition: log rho Psi = n log rho Phi}
In \cref{setup: hilbert scheme part}, we have
    \[
    \log \rho(\numg{\Psi}) = n\log\rho(\numg{\Phi}).
    \]
\end{prop}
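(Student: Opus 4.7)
The plan is to mirror the argument of \cref{proposition: hcat Psi = hcat n Phi} at the linear-algebraic level. By the Künneth formula, $N(S^n)_\CB \cong N(S)_\CB^{\otimes n}$, and under this identification $\numg{\Phin} = (\numg{\Phi})^{\otimes n}$. Since the spectrum of a tensor power is the set of $n$-fold products of eigenvalues, this immediately yields $\rho(\numg{\Phin}) = \rho(\numg{\Phi})^n$, reducing the problem to comparing $\numg{\Psi}$ with $\numg{\Phin}$.

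For the lower bound $\rho(\numg{\Psi}) \geq \rho(\numg{\Phi})^n$, I would take $v \in N(S)_\CB$ an eigenvector of $\numg{\Phi}$ with eigenvalue $\lambda$ of maximum modulus. Then $v^{\otimes n}$ is $\Sym(n)$-invariant and satisfies $\numg{\Phin}(v^{\otimes n}) = \lambda^n v^{\otimes n}$. The intertwining $\Psi \circ \Ind = \Ind \circ \Phin$, passed to numerical Grothendieck groups, gives $\numg{\Psi}(\Ind_* v^{\otimes n}) = \lambda^n \Ind_*(v^{\otimes n})$. To see that $\Ind_*(v^{\otimes n}) \neq 0$, apply $\For_*$: the identity $\For \circ \Ind = \bigoplus_{g} \rho_g$ gives $\For_* \Ind_*(v^{\otimes n}) = \sum_{g \in \Sym(n)} g^*(v^{\otimes n}) = n! \, v^{\otimes n} \neq 0$. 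So $\lambda^n$ is an eigenvalue of $\numg{\Psi}$, and the lower bound follows.

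The main obstacle will be the upper bound $\rho(\numg{\Psi}) \leq \rho(\numg{\Phi})^n$, because $\For_*$ is not injective in general---for instance, twisting an equivariant structure by the sign character of $\Sym(n)$ produces nontrivial kernel elements---so eigenvalues of $\numg{\Psi}$ do not lift directly to $\numg{\Phin}$. I plan to address this via an Atiyah--Segal-type decomposition
\[
N\bigl(D^b(S^n)_{\Sym(n)}\bigr)_\CB \ \cong\ \bigoplus_{\lambda \vdash n} N\bigl(S^{\ell(\lambda)}\bigr)^{C(\sigma_\lambda)}_\CB,
\]
indexed by partitions $\lambda$ of $n$, where $\sigma_\lambda$ is a representative of the corresponding conjugacy class whose fixed locus in $S^n$ is the diagonal $S^{\ell(\lambda)}$ and $C(\sigma_\lambda)$ denotes its centralizer in $\Sym(n)$. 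Since $\Phin$ is $\Sym(n)$-equivariant, it preserves each fixed locus and acts on the $\lambda$-summand as $(\numg{\Phi})^{\otimes \ell(\lambda)}$ restricted to centralizer invariants; hence the spectral radius on this summand is at most $\rho(\numg{\Phi})^{\ell(\lambda)}$. Because $\numg{\Phi}$ is an integral automorphism of the lattice $N(S)$ with determinant $\pm 1$, one has $\rho(\numg{\Phi}) \geq 1$, so the maximum of $\rho(\numg{\Phi})^{\ell(\lambda)}$ over $\lambda \vdash n$ is attained at $\lambda = (1^n)$, yielding $\rho(\numg{\Phi})^n$. Taking logarithms gives the proposition.
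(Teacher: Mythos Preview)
Your lower bound is exactly the paper's argument, with the nonvanishing of $\Ind_*(v^{\otimes n})$ spelled out via $\For_*\Ind_* = n!$ on invariants. For the upper bound you diverge from the paper. The paper asserts in one line that the eigenvalues of $\numg{\Psi}$ lie among those of $\numg{\Phin}$, invoking only the commutation with $\For$; as you anticipate, $\For_*$ is not injective on numerical Grothendieck groups, and in fact the inclusion of spectra already fails for $\Phi=[1]$, $n=2$ (here $\numg{\Phin}=\id$ on $N(S^2)_\CB$, whereas $\numg{\Psi}$ acts by $-1$ on the twisted sector of $N(S^{[2]})_\CB$). Your Atiyah--Segal decomposition, together with the lattice bound $\rho(\numg{\Phi})\ge 1$, is the correct way to repair this and supplies what the paper's argument is missing.

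One point needs adjusting. The action of $\numg{\Psi}$ on the $\lambda$-summand is not literally $(\numg{\Phi})^{\otimes\ell(\lambda)}$: for a single length-$k$ cycle the localisation map sends $(F^{\boxtimes k},\text{cyclic})$ to the Adams class $\psi^k[F]\in K_0(S)_\CB$, so the induced operator on that sector is $\psi^k\circ\numg{\Phi}\circ(\psi^k)^{-1}$, and for general $\lambda$ one obtains $\bigotimes_j \psi^{\lambda_j}\,\numg{\Phi}\,(\psi^{\lambda_j})^{-1}$ on $K_0(S)_\CB^{\otimes\ell(\lambda)}$ restricted to centralizer invariants (for instance, with $\Phi=-\otimes L$ and $\lambda=(2)$ the action is multiplication by $[L^{\otimes 2}]$, not by $[L]$). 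Since Adams operations are invertible over $\CB$, this operator is conjugate to $(\numg{\Phi})^{\otimes\ell(\lambda)}$ and your spectral-radius estimate $\rho(\numg{\Phi})^{\ell(\lambda)}\le\rho(\numg{\Phi})^n$ survives intact; but the phrase ``$\Phin$ preserves each fixed locus and acts as $(\numg{\Phi})^{\otimes\ell(\lambda)}$'' should be replaced by this conjugation statement.
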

\begin{proof}
    By the K\"unneth formula we have $N(S^n)_{\CB} = N(S)_{\CB}^{\otimes n}$.
    As $\numg{(\Phin)} = (\numg{\Phi})^{\otimes n}$, we have
    \[
    \rho(\numg{(\Phin)}) = \big(\rho(\numg{\Phi}) \big)^n.
    \]
    Let $\lambda$ be an eigenvalue that achieves the spectral radius of $\numg{\Phi}$, i.e. $\abs{\lambda} = \rho(\numg{\Phi})$, and $v\in N(S)_{\CB}$ the eigenvector associated to the eigenvalue $\lambda$.
    Then, $v^{\otimes n}$ is an eigenvector associated to the eigenvalue $\lambda^n$ of $\numg{(\Phin)}$.
    As $v^{\otimes n}$ is $\Sym(n)$-invariant, $v^{\otimes n}\in N(S^n)_{\CB}^{\Sym(n)}$.
    Thus, we have
    \[
    \rho(\numg{\Psi}) \ge \abs{\lambda^n} = \rho(\numg{\Phi})^n.
    \]
    On the other hand, because the set of eigenvalues of $\numg{\Psi}$ is the subset of that of $\numg{(\Phin)}$ by \cref{proposition: commutativity of auto and for}, we have $\rho(\numg{\Psi}) \le\rho(\numg{\Phi})^n$.
    Thus, $\rho(\numg{\Psi}) = \rho(\numg{(\Phin)})$.

    The conclusion is 
    \[
    \log\rho(\numg{\Psi}) = \log\rho(\numg{(\Phin)}) = n\log\rho(\numg{\Phi}). 
    \]
\end{proof}

\begin{thm}\label{theorem: GY property for Hilb}
    Let $S$ be a surface that fails to satisfy the property \ref{property: Gromov-Yomdin property}.
    Then, the Hilbert scheme of points $\Hilb^n(S)$ also does not satisfy \ref{property: Gromov-Yomdin property}.
\end{thm}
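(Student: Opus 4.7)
The plan is to reduce the assertion directly to the two scaling identities established above, \cref{proposition: hcat Psi = hcat n Phi} and \cref{proposition: log rho Psi = n log rho Phi}. Assuming $S$ fails \ref{property: Gromov-Yomdin property}, by definition there exists some $\Phi \in \Auteq(S)$ with $\hcat(\Phi) \neq \log\rho(\numg{\Phi})$; since in general $\hcat(\Phi) \geq \log\rho(\numg{\Phi})$ (see \cite{kikuta_takahashi_2019_on_the_categorical_entropy_and_the_topological_entropy}), this is necessarily a strict inequality $\hcat(\Phi) > \log\rho(\numg{\Phi})$.

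First I would pass from $\Phi$ to the associated autoequivalence $\Psi$ of $D^b(\Hilb^n(S))$ via the construction in \cref{setup: hilbert scheme part}: form $\Phin$ on $D^b(S^n)$, descend it to $D^b_{\Sym(n)}(S^n)$ using that $\Phin$ commutes with the $\Sym(n)$-action on $S^n$ by construction, and transport the result through the BKR-Haiman equivalence. Then \cref{proposition: hcat Psi = hcat n Phi} gives $\hcat(\Psi) = n\hcat(\Phi)$, while \cref{proposition: log rho Psi = n log rho Phi} gives $\log\rho(\numg{\Psi}) = n\log\rho(\numg{\Phi})$. Multiplying the strict inequality $\hcat(\Phi) > \log\rho(\numg{\Phi})$ by the positive integer $n$ preserves strictness, so $\hcat(\Psi) > \log\rho(\numg{\Psi})$, exhibiting $\Psi$ as a witness that $\Hilb^n(S)$ fails \ref{property: Gromov-Yomdin property}.

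There is essentially no obstacle in this final deduction — all of the substance lies in the scaling formulas already established in the two preceding propositions. The only subtlety worth flagging is the need to invoke \emph{strict} inequality, rather than mere non-equality, in order for multiplication by $n$ to transfer the defect cleanly; this is precisely why one should cite the general lower bound $\hcat \geq \log\rho(\numg{-})$ at the outset.
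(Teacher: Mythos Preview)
Your proof is correct and follows essentially the same route as the paper: pick $\Phi$ violating \ref{property: Gromov-Yomdin property}, pass to $\Psi$ via \cref{setup: hilbert scheme part}, and chain together \cref{proposition: hcat Psi = hcat n Phi} and \cref{proposition: log rho Psi = n log rho Phi} to obtain $\hcat(\Psi)=n\hcat(\Phi)>n\log\rho(\numg{\Phi})=\log\rho(\numg{\Psi})$. Your explicit invocation of the general lower bound $\hcat\ge\log\rho(\numg{-})$ to justify that the failure of \ref{property: Gromov-Yomdin property} must be a strict inequality in the upward direction is a nice point of care that the paper leaves implicit.
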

As mentioned above, surfaces with $(-2)$-curves, K3 surfaces, and Enriques surfaces do not satisfy the property (GY), while all Fano varieties and varieties of general type, abelian surfaces, and bielliptic surfaces do.
\begin{proof}
    Let $\Phi$ be an autoequivalence of $D^b(S)$ that satisfies $\hcat(\Phi) > \log\rho(\numg{\Phi})$.
    Then, with the notation of \cref{setup: hilbert scheme part}, by \cref{proposition: log rho Psi = n log rho Phi,proposition: hcat Psi = hcat n Phi} the following inequality holds:
    \begin{align*}
        \hcat(\Psi) = n\hcat(\Phi) > n\log\rho(\numg{\Phi}) = \log\rho(\numg{\Psi}).
    \end{align*}
    This is what we want.
\end{proof}

In particular, we have the following corollary for hyperk\"ahler manifold of $K3^{[n]}$-type:

\begin{cor}
    Let $X$ be a hyperk\"ahler manifold of $K3^{[n]}$-type that is derived equivalent to $\Hilb^n(S)$ for a K3 surface $S$.
    Then, $X$ does not have the property \ref{property: Gromov-Yomdin property}.
\end{cor}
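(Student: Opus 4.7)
My plan is to combine \cref{theorem: GY property for Hilb} with the invariance of the (GY) property under derived equivalence. First I would invoke Ouchi's theorem, recalled earlier in the excerpt, to fix an autoequivalence $\Phi_0 \in \Auteq(S)$ witnessing the failure of (GY) for the K3 surface $S$, namely $\hcat(\Phi_0) > \log\rho(\numg{\Phi_0})$. Applying \cref{theorem: GY property for Hilb} (whose proof passes through \cref{proposition: hcat Psi = hcat n Phi,proposition: log rho Psi = n log rho Phi}) then yields $\Psi \in \Auteq(\Hilb^n(S))$ with $\hcat(\Psi) = n\hcat(\Phi_0) > n\log\rho(\numg{\Phi_0}) = \log\rho(\numg{\Psi})$.

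Next, I would transport this strict inequality to $X$ along the given derived equivalence $\Xi \colon D^b(X) \xrightarrow{\sim} D^b(\Hilb^n(S))$. Setting $\Psi' \coloneqq \Xi^{-1} \circ \Psi \circ \Xi \in \Auteq(X)$, it suffices to establish the two conjugation invariances $\hcat(\Psi') = \hcat(\Psi)$ and $\rho(\numg{\Psi'}) = \rho(\numg{\Psi})$. The first follows by the same $\delta'$-computation as in the remark preceding \cref{proposition: hcat Psi = hcat n Phi}, taking $\varphi = \Xi$: generators transfer along $\Xi$ and the $\Ext$-groups appearing in the complexity-based definition of categorical entropy are preserved, so the two limits defining $h_t$ agree termwise. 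The second follows because any derived equivalence induces a linear isomorphism $\xi \colon N(X)_{\CB} \xrightarrow{\sim} N(\Hilb^n(S))_{\CB}$ satisfying $\numg{\Psi'} = \xi^{-1} \circ \numg{\Psi} \circ \xi$, and conjugate linear maps share their spectrum, hence their spectral radius.

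Combining the two steps produces $\hcat(\Psi') > \log\rho(\numg{\Psi'})$, so $\Psi'$ witnesses the failure of the \ref{property: Gromov-Yomdin property} property for $X$. I do not anticipate any serious obstacle here: the substantive work has been carried out in \cref{theorem: GY property for Hilb} itself, and the remaining ingredient, namely conjugation invariance of both sides of the Gromov--Yomdin equality, is formal and essentially already implicit in the framework set up in \cref{section: preliminaries cat entropy}.
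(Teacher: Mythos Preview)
Your proposal is correct and follows exactly the reasoning the paper leaves implicit: the corollary is stated without proof, as it is immediate from \cref{theorem: GY property for Hilb} (via Ouchi's result for K3 surfaces) together with the invariance of both $\hcat$ and the spectral radius under conjugation by a derived equivalence. You have simply spelled out these two invariance statements carefully, which is entirely appropriate.
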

Note that if two hyperk\"ahler manifolds of $K3^{[n]}$-type are birational to each other these are derived equivalent as shown in \cite{maulik_shen_yin_zhang_2025_the_dequivalence_conjecture_for_hyperkahler_varieties_via_hyperholomorphic_bundles}.

For a hyperk\"ahler manifold, the following section shows that there exists an explicit example that breaks the property (GY).

\section{Categorical Entropy on Hyperk\"ahler Manifolds}\label{section: categorical entropy on hyperkahler manifold}

In this section, we will show \cref{theorem in intro: GY property for HK}.

\begin{setup}\label{setup: def of Phi on hk mfd}
    Let $X$ be a projective hyperk\"ahler manifold of complex dimension $2n$ and $P$ be the $\PP{n}$-twist associated with the structure sheaf $\OO_X$.
    We take a very ample line bundle $\OO(1)$ on $X$ and $H\in\linsys{\OO(1)}$ a general member.

    Let us define an autoequivalence 
    \[
        \Phi\coloneqq P\circ (-\otimes \OO(-H)).
    \]

    Let $G\coloneqq \OO(1)\oplus\cdots\oplus\OO(2n+1)$ and 
    $G'\coloneqq \OO(-2n-1)\oplus\cdots\oplus\OO(-1)$ be generators of $D^b(X)$ and $k\in\ZZ_{>0}$, $l_m\in\ZZ_{>0}$ for each $m\in\ZZ_{>0}$.
\end{setup}

We use the following notations for ease of calculation:
\begin{itemize}
\item $d_i\coloneqq h^0(\OO(i)) > 1$ ($i\ge0$), and
\item $A_m(k, l_m)\coloneqq \Phi^m(\OO(-k))\otimes\OO(-l_m)$.
\end{itemize}
As $h^i(\OO(i)) = 0$ for $i>0$ by Kodaira vanishing, $d_i$ can be written as an $n$-th degree polynomial of $q_X(c_1(H))$, where $q_X$ is the (normalized) Beauville-Bogomolov-Fujiki form on $H^2(X, \ZZ)$ (see \cite[Corollary 23.18.]{book_gross_huybrechts_joyce_2003_calabiyau_manifolds_and_related_geometries}).
Note also that 
\[
A_{m+1}(k, l_{m+1}) = P(A_{m}(k, 1))\otimes\OO(-l_{m+1}).
\]

Recall that, for an object $E\in D^b(X)$, the $\PP{n}$-twist $P=P_{\OO_X}$ sends it to 
\begin{align*}
    P_{\OO_X}(E) &= 
    \Cone\big( \Cone(\RHom^{*-2}(\OO, E)\otimes\OO \to \RHom^*(\OO, E)\otimes\OO) \to E \big)\\
    &\cong \Cone\bigg( \Cone\big(\bigoplus_j \OO^{h^{j}(E)}[-j-2] \to \bigoplus_j \OO^{h^{j}(E)}[-j]\big) \to E \bigg).
\end{align*}

For an integer $k\in\ZZ_{>0}$, we define
\[
C^k_1\coloneqq \Cone\big(\RHom^{*-2}(\OO, \OO(-k-1))\otimes\OO\to \RHom^{*}(\OO, \OO(-k-1))\otimes\OO\big), 
\]
and
$C^k_m\coloneqq \Phi^{m-1}(C^k_1)$.
In addition, for any $m\in\ZZ_{>1}$, the object $D^k_m$ is defined to be 
\[
D^k_{m+1}\coloneqq \Cone\big(\RHom^{*-2}(\OO, C^k_{m}(-1))\otimes\OO\to \RHom^{*}(\OO, C^k_m(-1))\otimes\OO\big).
\]

By the description of $P$ above, we have the following relationship between them:
\begin{align}
    D^k_{m+1} \longrightarrow C^k_{m}(-1)\longrightarrow C^k_{m+1}\stackrel{[1]}{\longrightarrow}. 
    \label{triangle: induction on Ck}
\end{align}

On the other hand, the definition of $C^k_1$ deduces the following:
\begin{align}
        C^k_1 \longrightarrow \OO(-k-1) \longrightarrow \Phi(\OO(-k))\stackrel{[1]}{\longrightarrow}.
    \label{triangle: Ck1 and Phi OO(-k)}
\end{align}
Thus, by applying $\Phi^m$ to the triangle \eqref{triangle: Ck1 and Phi OO(-k)}, it holds that
\begin{align}
        C^k_{m+1} \longrightarrow \Phi^m(\OO(-k-1)) \longrightarrow\Phi^{m+1}(\OO(-k))\stackrel{[1]}{\longrightarrow}.\notag
\end{align}
In particular, we have
\begin{align}
    C^k_{m+1}\otimes\OO(-l_{m+1}) \longrightarrow A_m(k+1, l_{m+1}) \longrightarrow A_{m+1}(k, l_{m+1}) \stackrel{[1]}{\longrightarrow}.\label{triangle: C, A_m and A_m+1}
\end{align}

\begin{lem}\label{lemma: example of calculation m=1}
    The dimensions of cohomologies of $A_1(k, l_1) = P_{\OO}\big(\OO(-k)\otimes\OO(-1)\big)\otimes\OO(-l_1)$ are as follows:
    \begin{equation*}
        h^j(A_1(k, l_1)) = 
        \begin{cases}
            d_{k+l_{1}+1} & (j= 2n), \\
            d_{k+1}d_{l_1} & (j= 4n-1, 4n),\\
            0 & \text{otherwise.}
        \end{cases}
    \end{equation*}
\end{lem}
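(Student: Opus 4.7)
The plan is to apply the long exact sequences induced by the two nested cones defining the $\PP{n}$-twist. Concretely, I would start by tensoring the triangle \eqref{triangle: Ck1 and Phi OO(-k)} by $\OO(-l_1)$ to obtain
\[
C^k_1 \otimes \OO(-l_1) \longrightarrow \OO(-k-l_1-1) \longrightarrow A_1(k,l_1) \stackrel{[1]}{\longrightarrow},
\]
and read off $H^{\bullet}(A_1(k,l_1))$ from the associated long exact sequence in cohomology. This reduces the statement to two simpler computations: the cohomology of the line bundle $\OO(-k-l_1-1)$ and that of $C^k_1 \otimes \OO(-l_1)$.

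For the line bundle, since $\omega_X = \OO_X$, Serre duality combined with Kodaira vanishing gives $H^j(\OO(-m)) = d_m$ if $j=2n$ and $0$ otherwise for every $m\in\ZZ_{>0}$; in particular $H^j(\OO(-k-l_1-1))$ is concentrated in degree $2n$ with dimension $d_{k+l_1+1}$. The same observation identifies
\[
\RHom(\OO_X, \OO(-k-1)) \simeq \CB^{d_{k+1}}[-2n],
\]
so the inner cone simplifies to
\[
C^k_1 \;=\; \Cone\bigl(\OO^{d_{k+1}}[-2n-2] \longrightarrow \OO^{d_{k+1}}[-2n]\bigr).
\]
Tensoring this with $\OO(-l_1)$ and feeding it into its own long exact sequence (using the same Serre duality/Kodaira input applied to $\OO(-l_1)$) yields $H^j(C^k_1 \otimes \OO(-l_1)) = d_{k+1}d_{l_1}$ for $j=4n, 4n+1$ and zero otherwise; the independent map in the cone plays no role because its source and target sit in disjoint cohomological degrees.

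Plugging these into the long exact sequence of the outer triangle gives three nontrivial pieces that, for $n\ge 1$, occupy mutually distinct cohomological degrees. At $j=2n$ one reads $H^{2n}(A_1(k,l_1)) \cong H^{2n}(\OO(-k-l_1-1)) = d_{k+l_1+1}$; at $j=4n-1$ and $j=4n$ the connecting morphisms produce isomorphisms $H^{4n-1}(A_1(k,l_1)) \cong H^{4n}(C^k_1\otimes\OO(-l_1)) = d_{k+1}d_{l_1}$ and $H^{4n}(A_1(k,l_1)) \cong H^{4n+1}(C^k_1\otimes\OO(-l_1)) = d_{k+1}d_{l_1}$; and the LES forces all remaining cohomologies to vanish, giving exactly the claimed formula.

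The main obstacle is nothing conceptually deep: it is a careful bookkeeping of shifts inside the two nested cones, together with the verification that the degrees $2n$, $4n-1$, $4n$ stay pairwise distinct so that both long exact sequences split into isolated pieces (which holds for $n\ge 1$). The only substantive algebraic input is the Serre duality/Kodaira vanishing collapsing $\RHom(\OO,\OO(-k-1))$ into a single cohomological degree, which is what makes both cones degenerate so cleanly.
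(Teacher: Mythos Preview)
Your proposal is correct and follows essentially the same route as the paper: first compute $h^j(C^k_1(-l_1))$ from its defining triangle (obtaining $d_{k+1}d_{l_1}$ in degrees $4n,\,4n+1$ and zero elsewhere), then feed this together with $h^{2n}(\OO(-k-l_1-1))=d_{k+l_1+1}$ into the long exact sequence of \eqref{triangle: C, A_m and A_m+1} at $m=0$. Your extra remarks on Serre duality/Kodaira vanishing and on the disjointness of the degrees $2n,\,4n-1,\,4n$ (automatic since $n\ge 1$) just make explicit what the paper leaves implicit.
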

\begin{proof}
    By tensoring $\OO(-l_1)$ to the defining triangle of $C^k_1$, there is the triangle:
    \begin{align*}
        \OO(-l_1)^{\oplus d_{k+1}}[-2n-2] \longrightarrow \OO(-l_1)^{\oplus d_{k+1}}[-2n] \longrightarrow C^k_1(-l_1) \stackrel{[1]}{\longrightarrow}.
    \end{align*}
    This deduces that  
    \begin{equation*}
        h^j(C^k_1(-l_1)) =
        \begin{cases}
            d_{k+1}d_{l_1} &(j= 4n, 4n+1), \\
            0 & \text{otherwise}.
        \end{cases}
    \end{equation*}
    Therefore, the triangle \eqref{triangle: C, A_m and A_m+1} with $m=0$ is the form of
    \begin{align*}
        C^k_1(-l_1) \longrightarrow \OO(-k-1-l_1) \longrightarrow A_1(k, l_1) \stackrel{[1]}{\longrightarrow}, 
    \end{align*}
    and the cohomologies calculated above complete the proof.
\end{proof}

\begin{prop}\label{proposition: vanishing and top coh of A_m}
    For $j\ge 2n(m+1)+1$, we have
    \[
        h^j(A_m(k, l_m)) = 0
    \]
    for any $k$ and any $l_m$.
    Moreover, the dimension of the top cohomology of $A_m(k, l_m)$ is 
    \[
    h^{2n(m+1)}(A_m(k, l_m)) = d_{k+1}d_{l_m}d_1^{m-1}.
    \]
\end{prop}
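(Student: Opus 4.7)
The plan is to prove the proposition by induction on $m$, strengthening the hypothesis with a parallel claim about $C^k_m(-l)$. Alongside (i) — the statement of the proposition — I would carry the auxiliary claim
\[
\text{(ii)} \quad h^j(C^k_m(-l)) = 0 \text{ for } j \geq 2n(m+1)+2, \qquad h^{2n(m+1)+1}(C^k_m(-l)) = d_{k+1}\, d_l\, d_1^{m-1}.
\]
The base case $m = 1$ of (i) is the preceding lemma, and (ii) for $m = 1$ is directly visible from the cohomology of $C^k_1(-l)$ computed in its proof (top in degree $4n+1$ of dimension $d_{k+1} d_l$, and zero above).

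For the inductive step I first establish (ii) at level $m+1$ from the triangle
\[
D^k_{m+1}(-l) \longrightarrow C^k_m(-1-l) \longrightarrow C^k_{m+1}(-l) \stackrel{[1]}{\longrightarrow}
\]
and its long exact sequence in cohomology. By definition $D^k_{m+1}(-l)$ is the cone between two direct sums $\bigoplus_j \OO(-l)^{h^j(C^k_m(-1))}[-j-2]$ (the source) and $\bigoplus_j \OO(-l)^{h^j(C^k_m(-1))}[-j]$ (the target). By Serre duality and Kodaira vanishing (using $K_X \cong \OO_X$ on the hyperk\"ahler manifold) each $\OO(-l)^a[-j]$ has hypercohomology concentrated in the single degree $j + 2n$ of dimension $a\, d_l$ whenever $l > 0$. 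Combined with (ii) for $m$, which puts the top of $h^*(C^k_m(-1))$ in degree $2n(m+1) + 1$ of dimension $d_{k+1}\, d_1^m$, the source then has top cohomology in degree $2n(m+2) + 3$ and the target in degree $2n(m+2) + 1$, both of dimension $d_{k+1}\, d_1^m\, d_l$. The long exact sequence for the defining cone of $D^k_{m+1}(-l)$ then forces $h^{2n(m+2)+2}(D^k_{m+1}(-l)) = d_{k+1}\, d_1^m\, d_l$ and vanishing above. Feeding this into the long exact sequence of the displayed triangle, together with the inductive vanishing of $C^k_m(-1-l)$ in degrees $\geq 2n(m+1)+2$, yields (ii) at level $m+1$: top in degree $2n(m+2)+1$ of dimension $d_{k+1}\, d_l\, d_1^m$ and vanishing above.

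The statement (i) at level $m+1$ is then obtained from the triangle
\[
C^k_{m+1}(-l_{m+1}) \longrightarrow A_m(k+1, l_{m+1}) \longrightarrow A_{m+1}(k, l_{m+1}) \stackrel{[1]}{\longrightarrow}.
\]
By (i) at level $m$, $A_m(k+1, l_{m+1})$ vanishes in degrees $\geq 2n(m+1)+1$; by the freshly proved (ii) at level $m+1$, $C^k_{m+1}(-l_{m+1})$ vanishes in degrees $\geq 2n(m+2)+2$ with top cohomology of dimension $d_{k+1}\, d_{l_{m+1}}\, d_1^m$. The long exact sequence then immediately forces $h^j(A_{m+1}(k, l_{m+1})) = 0$ for $j \geq 2n(m+2)+1$ and identifies $h^{2n(m+2)}(A_{m+1}(k, l_{m+1}))$ with $h^{2n(m+2)+1}(C^k_{m+1}(-l_{m+1})) = d_{k+1}\, d_{l_{m+1}}\, d_1^m$, closing the induction.

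The main technical obstacle is the computation of the top of $D^k_{m+1}(-l)$: the $[-2]$-shift in its defining cone pushes the source's top cohomological degree two beyond the target's, so naive bounds overestimate the support. The crucial point is that the target vanishes in degrees $\geq 2n(m+2)+2$ by the inductive hypothesis (since the top of $h^*(C^k_m(-1))$ sits in degree $2n(m+1)+1$), which enables the long exact sequence to pin down the exact top dimension $d_{k+1}\, d_1^m\, d_l$ of $D^k_{m+1}(-l)$, transferred from $h^{2n(m+2)+3}$ of the source. This is precisely the cancellation phenomenon already present in the base-case calculation, now propagated through the induction.
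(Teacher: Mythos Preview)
Your argument is correct and follows the same route as the paper's proof: the same three triangles (the defining triangle of $D^k_{m+1}$, the triangle \eqref{triangle: induction on Ck}, and the triangle \eqref{triangle: C, A_m and A_m+1}) are chained in the same order to pass from level $m$ to level $m+1$, and the top degrees are read off exactly as in the paper's \cref{table: cohomology of D^k_m+1}. The only organisational difference is that you carry the auxiliary statement about $C^k_m(-l)$ as part of the inductive hypothesis from the start, whereas the paper re-derives it at each step from the hypothesis on $A_{m-1}$ and $A_m$ via \eqref{triangle: C, A_m and A_m+1}; this is cosmetic and saves one step.
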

\begin{proof}
    The case of $m=1$ is given in \cref{lemma: example of calculation m=1}.
    Suppose that for every $j \le m$ our claims hold.
    
    First, under the inductive assumption, we will show the following claim:
    \begin{claim}\label{claim: assumption on induction to vanishing of Ckm}
    For $j\ge 2n(m+1)+2$, we have
    \[
        h^j(C^k_m(-l_m)) = 0
    \]
    for any $k$ and any $l_m$.
    Moreover, the dimension of the top cohomology of $C^k_m(-l_m)$ is 
    \[
    h^{2n(m+1)+1}(C^k_m(-l_m)) = d_{k+1}d_{l_m}d_1^{m-1}.
    \]
    \end{claim}
    \begin{proof}[\proofname\  of \cref{claim: assumption on induction to vanishing of Ckm}]
        By the assumption, 
        for $j\ge 2nm+1$, $h^j\big(A_{m-1}(k+1, l_{m-1})\big) = 0$ and 
        for $i\ge2n(m+1)+1$, $h^i\big(A_m(k, l_{m-1})\big) = 0$. 
        Thus, the triangle \eqref{triangle: C, A_m and A_m+1} shows the vanishing of the cohomology of $C^k_m(-l_{m})$.
        In particular, the long exact sequence of cohomologies deduces that
        \[
            H^{2n(m+1)+1}(C^k_m(-l_{m}))\cong H^{2n(m+1)}(A_m(k, l_m))
        \]
        and thus, again by the inductive assumption, the claim holds.
    \end{proof}
    Now, let us go back to the proof of \cref{proposition: vanishing and top coh of A_m}.

    Taking the long exact sequence of the defining triangle of $D^k_{m+1}$ tensored with $\OO(-l_{m+1})$
    \begin{align}
        \RHom^{*-2}(\OO, C^k_{m}(-1))\otimes\OO(-l_{m+1})&\to \RHom^{*}(\OO, C^k_m(-1))\otimes\OO(-l_{m+1}) \notag \\
        &\longrightarrow D^k_{m+1}(-l_{m+1})\stackrel{[1]}{\longrightarrow}
        \notag
    \end{align}
    and \cref{claim: assumption on induction to vanishing of Ckm} induce the following cohomology table (\cref{table: cohomology of D^k_m+1}):
{\small
    \begin{table}[ht]
        \centering
        \begin{tabular}{c|c|c|c}
            &$\RHom^{*-2}(\OO, C^k_{m}(-1))\otimes\OO(-l_{m+1})$ & $\RHom^{*}(\OO, C^k_{m}(-1))\otimes\OO(-l_{m+1})$ & $D^k_{m+1}(-l_{m+1})$ \\ \hline
            $\vdots$ & $\vdots$ & $\vdots$ & $\vdots$ \\ 
            $2n(m+2)$ & ----- & ----- & -----\\
            $2n(m+2)+1$ & ----- & $d_{k+1}d_{l_{m+1}}d_1^{m}$ & -----\\
            $2n(m+2)+2$ & ----- & $0$ & $d_{k+1}d_{l_{m+1}}d_1^{m}$\\
            $2n(m+2)+3$ & $d_{k+1}d_{l_{m+1}}d_1^{m}$ & $0$ & $0$ \\
            $2n(m+2)+4$ & $0$ & $0$ & $0$ \\
            $\vdots$ & $\vdots$ & $\vdots$ & $\vdots$ 
        \end{tabular}
        \vspace{1cm}
        \caption{Dimension of the Top Cohomology of $D^k_{m+1}$}
        \label{table: cohomology of D^k_m+1}
    \end{table}
}
    
    Note that here we use 
    \[
    h^{2n(m+1)+1}(C^k_m(-1)) = d_{k+1}d_{1}d_1^{m-1} =d_{k+1}d_1^{m}
    \]
    by \cref{claim: assumption on induction to vanishing of Ckm}.
    
    From the triangle \eqref{triangle: induction on Ck} and \cref{claim: assumption on induction to vanishing of Ckm}, a calculation similar to \cref{table: cohomology of D^k_m+1} shows the isomorphism of cohomologies
    \[
    H^{2n(m+2)+2}(D^k_{m+1}(-l_{m+1})) \cong H^{2n(m+2)+1}(C^k_{m+1}(-l_{m+1}))
    \]
    and the vanishing
    \[
    H^{j}(C^k_{m+1}(-l_{m+1}))=0 \quad \text{for} \quad j\ge2n(m+2)+2.
    \]
    Combining a similar calculation above to \eqref{triangle: C, A_m and A_m+1} again and our inductive assumption leads to 
    \[
    H^{2n(m+2)+1}(C^k_{m+1}(-l_{m+1}))\cong H^{2n(m+2)}(A_{m+1}(k, l_{m+1}))
    \]
    and the vanishing
    \[
    H^{j}(A_{m+1}(k, l_{m+1}))=0 \quad \text{for} \quad j\ge2n(m+2)+1.
    \]
    Hence, it completes the proof.
\end{proof}

\begin{prop}\label{proposition: positivity of cat entr of Phi}
    In \cref{setup: def of Phi on hk mfd}, we have 
    \[
        \hcat(\Phi) > 0.
    \]
\end{prop}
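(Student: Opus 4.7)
The plan is to apply \cref{theorem: simple form of entropy for ext finite} to the generators $G$ and $G'$ fixed in \cref{setup: def of Phi on hk mfd} and to harvest a geometric growth rate of order $d_1^m$ directly from the top cohomology of $A_m(k,l_m)$ computed in \cref{proposition: vanishing and top coh of A_m}. Since we only need a \emph{lower} bound for the entropy at $t=0$, it will be enough to retain a single cohomological degree in each term.

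First, I would decompose
\[
\delta'(G, \Phi^m(G')) = \sum_{k \in \ZZ}\ext^{k}(G, \Phi^m(G')) = \sum_{i, l = 1}^{2n+1} \sum_{k \in \ZZ} \ext^{k}(\OO(i), \Phi^m(\OO(-l))),
\]
and then use the isomorphism
\[
\Ext^{k}(\OO(i), \Phi^m(\OO(-l))) \cong H^{k}(\Phi^m(\OO(-l)) \otimes \OO(-i)) = H^{k}(A_m(l, i))
\]
to rewrite the whole sum in terms of the objects studied in \cref{proposition: vanishing and top coh of A_m}. This identification puts the computation directly in the framework already set up.

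Next, I would discard every cohomological contribution except the top one, obtaining the lower bound
\[
\delta'(G, \Phi^m(G')) \;\ge\; \sum_{i, l = 1}^{2n+1} h^{2n(m+1)}(A_m(l, i)) \;=\; \Biggl(\sum_{i, l = 1}^{2n+1} d_{l+1}\, d_{i}\Biggr) \cdot d_1^{m-1},
\]
where I use $h^{2n(m+1)}(A_m(l,i)) = d_{l+1}\, d_{i}\, d_1^{m-1}$ from \cref{proposition: vanishing and top coh of A_m}. Taking logarithms, dividing by $m$, and passing to the limit, \cref{theorem: simple form of entropy for ext finite} then yields
\[
\hcat(\Phi) \;\ge\; \log d_1.
\]
Since $\OO(1)$ is very ample on the positive-dimensional variety $X$, we have $d_1 = h^0(\OO(1)) > 1$, so $\log d_1 > 0$ and the proposition follows.

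The substantive work was already absorbed into the inductive cohomology computation of \cref{proposition: vanishing and top coh of A_m}; once that is in hand, the proof of positivity reduces to the bookkeeping step above. The only point that requires a brief remark is the strict inequality $d_1 > 1$, which I would justify by noting that any very ample line bundle on a variety of dimension at least one admits at least two linearly independent global sections.
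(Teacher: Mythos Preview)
Your proof is correct and follows essentially the same approach as the paper: decompose $\delta'(G,\Phi^m(G'))$ into the double sum over the direct summands of $G$ and $G'$, identify each term with $h^{\bullet}(A_m(l,i))$, retain only the top cohomology computed in \cref{proposition: vanishing and top coh of A_m}, and extract the growth rate $d_1^{m}$ to obtain $\hcat(\Phi)\ge\log d_1>0$. The only cosmetic difference is that the paper further simplifies the double sum to the single lower bound $d_1^{m+1}$ before taking the limit.
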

\begin{proof}
    By the calculation of cohomologies in \cref{proposition: vanishing and top coh of A_m}, 
    the following lower bound for $\delta'(G, \Phi^m(G'))$ holds:
    \begin{align*}
        \delta'(G, \Phi^m(G')) &= \sum_{k=1}^{2n+1}\sum_{l_m=1}^{2n+1} \sum_{i\in\ZZ} \ext^i(\OO(l_m), \Phi^m(\OO(-k)))\\
        &= \sum_{k=1}^{2n+1}\sum_{l_m=1}^{2n+1} \sum_{i\in\ZZ} h^i(A_m(k, l_m))\\
        &\ge \sum_{k=1}^{2n+1}\sum_{l_m=1}^{2n+1} d_{k+1}d_{l_m}d_1^{m-1}\\
        &\ge d_1^{m+1}.
    \end{align*}
    Therefore, we obtain 
    \begin{align*}
        \hcat(\Phi) 
        &= \lim_{m\to \infty} \frac{1}{m}\log \big(\sum_{k\in\ZZ} \ext^{k}(G, (\Phi)^{m}(G'))\big)\\
        &\ge \lim_{m\to \infty} \frac{1}{m}\log \big(d_1^{m+1}\big)\\
        &= \log d_1 > 0.
    \end{align*}
\end{proof}

\begin{thm}\label{theorem: GY property of hyperkahler manifold}
    Any hyperk\"ahler manifold $X$ admits an autoequivalence with positive categorical entropy.
    Moreover, $X$ does not satisfy the property \ref{property: Gromov-Yomdin property}.
\end{thm}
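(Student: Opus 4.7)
The positive-entropy part is immediate: take the autoequivalence $\Phi = P\circ(-\otimes\OO(-H))$ from \cref{setup: def of Phi on hk mfd}. \cref{proposition: positivity of cat entr of Phi} already gives $\hcat(\Phi)\ge \log d_1 > 0$. So the real content is to prove that this same $\Phi$ witnesses the failure of \ref{property: Gromov-Yomdin property}, namely that $\log\rho(\numg{\Phi}) = 0$.

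First I would treat the two factors of $\Phi$ separately. For $P = P_{\OO_X}$, Huybrechts--Thomas show in \cite{huybrechts_thomas_2006_bbb_pobjects_and_autoequivalences_of_derived_categories} that $P_{\OO_X}$ acts as the identity on the cohomology ring $H^*(X,\CB)$. Since the action on $N(X)_\CB$ factors through cohomology via the Mukai vector (equivalently the Chern character), this gives $\numg{P} = \id_{N(X)_\CB}$. For the tensor factor $-\otimes\OO(-H)$, the Chern character identity $\ch(\OO(-H)) = \exp(-c_1(H))$ shows that on $\bigoplus_p H^{p,p}(X,\CB)$, tensoring with $\OO(-H)$ is multiplication by an element of the form $1 + (\text{positive degree})$; in a basis ordered by codimension it is upper triangular with $1$s on the diagonal. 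It is therefore a unipotent automorphism, and restricting to $N(X)_\CB$ preserves unipotence, so $\rho(\numg{(-\otimes\OO(-H))}) = 1$.

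Combining the two calculations, $\numg{\Phi} = \numg{P}\circ\numg{(-\otimes\OO(-H))}$ equals $\numg{(-\otimes\OO(-H))}$, which is unipotent. Hence $\log\rho(\numg{\Phi}) = 0$, and together with $\hcat(\Phi)\ge\log d_1 > 0$ from \cref{proposition: positivity of cat entr of Phi} we conclude
\[
\hcat(\Phi) > 0 = \log\rho(\numg{\Phi}),
\]
so $X$ fails \ref{property: Gromov-Yomdin property}.

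The one potential subtlety is the transition between the cohomology ring and the numerical Grothendieck group: one must check that the cited triviality of the $\PP{n}$-twist on $H^*(X,\CB)$ and the unipotence of tensor-by-a-line-bundle on $H^*(X,\CB)$ descend correctly to $N(X)_\CB$. Both follow formally from the fact that the Chern character identifies $N(X)_\CB$ with a graded subspace of $\bigoplus_p H^{p,p}(X,\CB)$ on which both operations act naturally, so the argument is structural and does not require any new estimate beyond the one already made in the previous subsection.
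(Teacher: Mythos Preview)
Your proof is correct and follows essentially the same route as the paper: invoke \cref{proposition: positivity of cat entr of Phi} for positivity, then show $\numg{\Phi}$ is unipotent because the $\PP{n}$-twist acts trivially on cohomology (Huybrechts--Thomas) while $-\otimes\OO(-H)$ acts by multiplication with $e^{-H}$. Your added remarks on the passage from $H^*(X,\CB)$ to $N(X)_\CB$ make explicit what the paper leaves implicit, but the argument is the same.
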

\begin{proof}
    \cref{proposition: positivity of cat entr of Phi} shows the first assertion.
    For the latter one, it is enough to show that the spectral radius of $\Phi$ vanishes.
    Note that while the tensoring with $\OO(-H)$ acts non-trivially on the cohomology ring via multiplication by the Chern character $\ch(\OO(-H)) = e^{-H}$, this action is unipotent (its eigenvalues are all 1). 
    Combined with the fact that the $\PP{n}$-twist $P$ acts as the identity on cohomology (\cite[Remark 2.4.]{huybrechts_thomas_2006_bbb_pobjects_and_autoequivalences_of_derived_categories}), the induced linear map $\numg{\Phi}$ has spectral radius $\rho(\Phi^N) = 1$ and thus we have 
    \[
    \log\rho(\numg{\Phi}) = 0.
    \]
    Thus, we have the inequality
    \[
        \hcat(\Phi) >0 = \log\rho(\numg{\Phi}).
    \]
    As a result, the autoequivalence $\Phi$ is also an example that does not satisfy the Gromov-Yomdin type equality.
\end{proof}

\section{Enriques Manifolds}\label{section: Enriques manifolds}
In this section, we will see the proof of \cref{theorem in intro: GY property for Enriques manifolds}.
First, let us recall the definition of \emph{Enriques Manifolds} introduced by Oguiso and Schr\"oer. 
\begin{dfn}[\cite{oguiso_schroer_2011_enriques_manifolds}]\label{definition: Enriques manifolds}
    A connected complex manifold $Y$ is called an \emph{Enriques manifold} if it is not simply connected and its universal cover $X$ is a hyperk\"ahler manifold.
\end{dfn}

Note that \cite[Proposition 2.4.]{oguiso_schroer_2011_enriques_manifolds} says that $\pi_1(Y)$ is a cyclic group. Thus, the universal cover $X$ is the canonical cover of $Y$.

\begin{prop}[{\cite[Corollary 2.7.]{oguiso_schroer_2011_enriques_manifolds}}]
    Every Enriques manifold $Y$ is projective. 
    Moreover, its universal covering $X$ is also projective.
\end{prop}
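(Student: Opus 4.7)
The plan is to combine the Beauville–Huybrechts projectivity criterion for hyperk\"ahler manifolds with a straightforward $G$-equivariant averaging argument, where $G = \pi_1(Y)$ is the finite cyclic deck-transformation group of the universal covering $\pi \colon X \to Y$. Since this is a cited classical result, I will only outline the standard argument, which naturally splits into establishing projectivity of $X$ first and then descending to $Y$.

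First I would observe that, by definition, $X$ is hyperk\"ahler and therefore K\"ahler; averaging any K\"ahler form on $X$ over the finite group $G$ yields a $G$-invariant K\"ahler form $\omega$. Its class $[\omega]$ lies in the $G$-invariant positive cone $\mathcal{C}_X^G \subset H^{1,1}(X, \RB)^G$ with respect to the Beauville–Bogomolov–Fujiki form $q_X$. Because $G$ acts on $H^2(X,\ZZ)$ by $q_X$-isometries, this cone is a non-empty open subset of the $G$-fixed subspace.

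The main step is to upgrade $[\omega]$ to an integral class of positive BBF square living in $\NS(X)^G$. Since $H^{1,1}(X, \QB)^G$ is dense in $H^{1,1}(X, \RB)^G$ and $\mathcal{C}_X^G$ is open, a rational class $\alpha_0 \in \mathcal{C}_X^G$ exists; clearing denominators gives an integral class $\alpha \in \NS(X)^G$ with $q_X(\alpha) > 0$. By Huybrechts' projectivity criterion, $X$ is projective. Finally, since $\alpha$ is $G$-invariant, a sufficiently high tensor power of the associated line bundle admits a $G$-linearization and descends along $\pi$ to an ample line bundle on $Y$, proving that $Y$ is projective as well.

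The one mildly delicate point is the descent from a $G$-invariant line bundle on $X$ to an honest line bundle on $Y$: the obstruction lies in the Schur multiplier $H^2(G, \CB^{\times})$, which vanishes for cyclic $G$. Hence after passing to a sufficiently high power the $G$-linearization exists and descent is unobstructed. The main subtlety of the proposition, therefore, is really just the verification that the hyperk\"ahler-theoretic averaging and density arguments can be performed equivariantly under the action of the cyclic fundamental group; everything else is routine.
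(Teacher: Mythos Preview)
The paper gives no proof of this proposition; it is quoted directly from Oguiso--Schr\"oer, so there is nothing in the present paper to compare your argument against.

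That said, your outline has a genuine gap at precisely the step you treat as routine. The claim that $H^{1,1}(X,\QB)^G$ is dense in $H^{1,1}(X,\RB)^G$ is \emph{not} a general fact: it amounts to $H^{1,1}(X,\RB)^G = H^2(X,\RB)^G$, i.e.\ to $H^{2,0}(X)^G=0$, which says that $G=\pi_1(Y)$ acts on the symplectic form $\sigma$ through a faithful character. (Already for $G$ trivial and $X$ a non-projective hyperk\"ahler manifold the density fails, so some input specific to Enriques manifolds is needed here.) That $G$ acts faithfully on $\CB\sigma$ is exactly the substantive content of Oguiso--Schr\"oer's Proposition~2.4, proved via the identity $\chi(\OO_X)=\lvert G\rvert\cdot\chi(\OO_Y)$ together with $\chi(\OO_X)=n+1$. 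Once one has $h^{2,0}(Y)=0$, the clean argument is actually the reverse of yours: $H^2(Y,\RB)=H^{1,1}(Y,\RB)$, so rational classes are dense in the open K\"ahler cone of $Y$, Kodaira gives $Y$ projective, and $X$ follows by pulling back an ample class. Your direction also has a minor second wrinkle: Huybrechts' criterion tells you only that $X$ is projective, not that your particular class $\alpha$ is ample, so ``the associated line bundle descends to an ample line bundle on $Y$'' needs an extra step (e.g.\ replace $\alpha$ by $\bigotimes_{g\in G}g^*A$ for some ample $A$ on $X$).
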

This ensures the existence of an autoequivalence $\Phi\in\Auteq X$ that is constructed in \cref{section: categorical entropy on hyperkahler manifold} and has positive categorical entropy.

Throughout this section, we follow the following notation and settings.
\begin{itemize}
    \item Let $Y$ be an Enriques manifold of dimension $2n$ and $\pi: X\to Y$ be its universal covering.
    \item Fix a very ample line bundle $\OO_X(1)$ on $X$.
    \item Let $\Phi\coloneqq P_{\OO_X} \circ (-\otimes\OO(-1))$. 
    \item Let $G = \Deck(\pi) (\cong\pi_1(Y))$ the group of deck covering transformations. Choose $g\in G$ as a generator of the cyclic group $G$.
\end{itemize}

In the rest of this section, we closely follow the discussion in \cite[Section 4]{preprint_tomoki_2025_a_note_on_categorical_entropy_of_bielliptic_surfaces_and_enriques_surfaces}, which discusses the case of surfaces.

Before proceeding with the proof of \cref{theorem in intro: GY property for Enriques manifolds}, 
we show the functor $\Phi$ descends to $D^b(Y)$, that is, the existence of an autoequivalence $\Psi\in\Auteq Y$ that lifts to $\Phi$.

\begin{prop}
    Let $g^*$ be the left derived pullback of the deck transformation $g$. 
    Then, it holds that
    \[
    P_{\OO_X}\circ g^{*} \cong g^{*}\circ P_{\OO_X}. 
    \]
\end{prop}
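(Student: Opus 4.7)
The plan is to invoke the general principle that $\PP{n}$-twists are equivariant under autoequivalences: for every autoequivalence $\Psi$ of $D^b(X)$ and every $\PP{n}$-object $E\in D^b(X)$, one has
\[
\Psi \circ P_E \cong P_{\Psi(E)} \circ \Psi.
\]
This is the $\PP{n}$-object analogue of the well-known conjugation formula for spherical twists (see e.g.\ \cite{book_huybrechts_2006_fouriermukai_transforms_in_algebraic_geometry}), and it follows by the same argument applied to the Fourier--Mukai kernel for $P_E$ constructed in \cite{huybrechts_thomas_2006_bbb_pobjects_and_autoequivalences_of_derived_categories}.

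To apply this to our situation, I would first observe that the deck transformation $g$ is an automorphism of $X$, hence $g^*\in\Auteq(X)$, with quasi-inverse $(g^{-1})^*$. Second, for any morphism of schemes the pullback of the structure sheaf is canonically the structure sheaf; in particular $g^*\OO_X\cong\OO_X$, so $P_{g^*\OO_X}\cong P_{\OO_X}$. Applying the conjugation formula with $\Psi = g^*$ then yields
\[
g^* \circ P_{\OO_X} \cong P_{g^*\OO_X}\circ g^* \cong P_{\OO_X}\circ g^*,
\]
as required.

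If instead one prefers a hands-on verification, the double-cone description of $P_{\OO_X}(F)$ recalled in \cref{section: non-standard auto of der cats} can be pulled back term by term: $g^*$ is exact, preserves cones, and commutes with $\RHom(\OO_X,-)$ via the canonical isomorphism $g^*\OO_X\cong\OO_X$. The only subtlety is that a generator $h\in\Hom(\OO_X,\OO_X[2])$ may be sent by $g^*$ to a nonzero scalar multiple of itself, but since the $\PP{n}$-twist depends on the choice of generator only up to nonzero scalar, this ambiguity is harmless. The main work, such as it is, is therefore not mathematical difficulty but bookkeeping: checking that the resulting natural isomorphism is functorial in $F$, which is automatic because $g^*$ is a genuine equivalence of triangulated categories and each step of the double-cone construction is natural in its inputs.
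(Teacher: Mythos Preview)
Your proposal is correct and follows essentially the same approach as the paper: both invoke the conjugation formula $\Psi\circ P_E\cong P_{\Psi(E)}\circ\Psi$ for an autoequivalence $\Psi$ and then use $g^*\OO_X\cong\OO_X$. The paper attributes this formula to \cite[Lemma 2.4.]{krug_2015_on_derived_autoequivalences_of_hilbert_schemes_and_generalized_kummer_varieties} rather than deriving it from \cite{huybrechts_thomas_2006_bbb_pobjects_and_autoequivalences_of_derived_categories}, and it omits the alternative hands-on verification you sketch, but the mathematical content is identical.
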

\begin{proof}
    For any autoequivalence $\phi\in\Auteq X$,
    \[
     \phi\circ P_{\OO_X}\cong P_{\phi(\OO_X)}\circ \phi
    \]
    holds from \cite[Lemma 2.4.]{krug_2015_on_derived_autoequivalences_of_hilbert_schemes_and_generalized_kummer_varieties}.
    Hence, because $g^*\OO_X\cong\OO_X$, we have the assertion.
\end{proof}

In particular, if we choose $\OO(1)$ as $g^*$-invariant we may assume 
\[
\Phi\circ g^{*} \cong g^{*}\circ \Phi.
\]
Thus, there exists the autoequivalence $\Psi\in\Auteq Y$ such that $\Phi$ is a lift of it by \cite[Section 3.3.]{ploog_2007_equivariant_autoequivalences_for_finite_group_actions}.

The converse is also true.
Indeed, in \cite{bridgeland_maciocia_2001_complex_surfaces_with_equivalent_derived_categories}, Bridgeland and Maciocia showed that any autoequivalence of a derived category of a smooth variety with finite canonical order lifts to that of its canonical covering.
In this situation, it is known that the categorical entropy of one is completely determined by that of the other.
\begin{prop}[{\cite[Proposition 4.2.]{preprint_tomoki_2025_a_note_on_categorical_entropy_of_bielliptic_surfaces_and_enriques_surfaces}}]
\label{proposition: entropy coincide for descent and lift}
    Let $Y$ be a smooth projective variety with a torsion canonical sheaf and 
    $\psi\in\Auteq Y$. 
    Let $\phi\in \Auteq X$ be a lift of $\psi$.
    Then, the equality of entropies holds, i.e., 
    \[
        h_t(\phi) = h_t(\psi).
    \]
\end{prop}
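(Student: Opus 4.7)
The plan is to express $h_t(\phi)$ using generators of $D^b(X)$ pulled back from $D^b(Y)$, then reduce the resulting $\Ext$ calculation on $X$ to one on $Y$ via adjunction and the projection formula, where $h_t(\psi)$ emerges directly from each summand.

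First, I would set up the covering data. Since $K_Y$ is torsion of some order $N$, the canonical cover $\pi\colon X\to Y$ is finite étale with cyclic Galois group $G$ of order $N$, and the pushforward decomposes as $\pi_*\OO_X\cong\bigoplus_{i=0}^{N-1}L_i$ for line bundles $L_i\in\Pic(Y)$ (the $G$-eigenline bundles). The hypothesis that $\phi$ lifts $\psi$ gives a functorial isomorphism $\phi\circ\pi^*\cong\pi^*\circ\psi$, which iterates to $\phi^n\circ\pi^*\cong\pi^*\circ\psi^n$ for all $n\ge1$.

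Next, fix classical generators $G_Y,G_Y'\in D^b(Y)$; then $\pi^*G_Y$ and $\pi^*G_Y'$ are classical generators of $D^b(X)$, which follows, as in \cref{lemma: For Inf preserves the classical generator}, from the equivariant description $D^b(Y)\cong D^b(X)_G$ under which $\pi^*$ corresponds to $\For$. By \cref{theorem: simple form of entropy for ext finite},
\[
h_t(\phi)=\lim_{n\to\infty}\frac{1}{n}\log\sum_{k\in\ZZ}\ext^k_X(\pi^*G_Y,\phi^n\pi^*G_Y')e^{-kt}.
\]
Applying the lift relation, $(\pi^*,\pi_*)$-adjunction, and the projection formula yields
\[
\Ext^k_X(\pi^*G_Y,\phi^n\pi^*G_Y')\cong\Ext^k_X(\pi^*G_Y,\pi^*\psi^n G_Y')\cong\bigoplus_{i=0}^{N-1}\Ext^k_Y(G_Y,\psi^n G_Y'\otimes L_i).
\]

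For each fixed $i$, the object $G_Y\otimes L_i\dual$ is again a classical generator of $D^b(Y)$, since tensoring with a line bundle is an autoequivalence. Hence, by the independence of $h_t$ from the choice of generators,
\[
\lim_{n\to\infty}\frac{1}{n}\log\sum_{k}\ext^k_Y(G_Y,\psi^n G_Y'\otimes L_i)e^{-kt}=\lim_{n\to\infty}\frac{1}{n}\log\sum_{k}\ext^k_Y(G_Y\otimes L_i\dual,\psi^n G_Y')e^{-kt}=h_t(\psi).
\]
Summing over the finite index set $i=0,\dots,N-1$ does not alter the exponential growth rate, so combining the two displays gives $h_t(\phi)=h_t(\psi)$.

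The main obstacle, which is really only a bookkeeping issue, is to verify that $\pi^*$ preserves classical generators and that the projection formula and adjunction go through at the derived level in the required form; these are standard for finite étale cyclic covers and follow from the equivariant description recalled in \cref{subsection: equivariant categories}. Once these ingredients are in place, the argument parallels the surface case treated in the author's earlier work.
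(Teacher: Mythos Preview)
The paper does not prove this proposition; it is quoted from the author's earlier preprint and used as a black box. Your argument is correct and is precisely the expected one. In fact it mirrors the proof the paper does give for the analogous statement \cref{proposition: hcat Psi = hcat n Phi}: there the equality of entropies is obtained via the $\For$/$\Ind$ adjunction and \cref{lemma: For Inf preserves the classical generator}, while you phrase the same mechanism geometrically through $\pi^*$, $\pi_*$, and the projection formula. Under the identification $D^b(Y)\cong D^b(X)_G$ for the free $G$-action, these are the same adjunction, so your approach is not merely correct but essentially the same as the one the paper employs elsewhere and presumably the one in the cited preprint.
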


\begin{lem}\label{lemma: spectral radius of Psi on Enriques mfd}
    With the settings above, the logarithm of the spectral radius of $\Psi$ vanishes, i.e., 
    \[
    \log\rho(\numg{\Psi}) = 0.
    \]
\end{lem}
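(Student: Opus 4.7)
The plan is to reduce the lemma to the already observed unipotence of $\numg{\Phi}$: in the proof of \cref{theorem: GY property of hyperkahler manifold} it was shown that $P_{\OO_X}$ acts as the identity on $N(X)_{\CB}$, while tensoring with $\OO_X(-1)$ acts via multiplication by $\ch(\OO_X(-1))$, a unipotent endomorphism. Hence every eigenvalue of $\numg{\Phi}$ equals $1$, and it suffices to transfer this property to $\numg{\Psi}$ via the covering $\pi$.

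First I set up the pullback $\pi^{*} : N(Y)_{\CB} \to N(X)_{\CB}$. Since $\pi$ is a finite \'etale cover of degree $|G|$ and $\pi_{*}\pi^{*} = |G|\cdot\id$, the map $\pi^{*}$ is injective. Next, because $\Phi$ is by construction a lift of $\Psi$ in the sense recalled before \cref{proposition: entropy coincide for descent and lift}, the functors satisfy $\pi^{*}\circ\Psi \cong \Phi\circ\pi^{*}$. Passing to the numerical Grothendieck group yields
\[
\numg{\Phi}\circ\pi^{*} = \pi^{*}\circ\numg{\Psi},
\]
so $\pi^{*}(N(Y)_{\CB}) \subset N(X)_{\CB}$ is an $\numg{\Phi}$-invariant subspace, and the restriction of $\numg{\Phi}$ to this subspace is conjugate via $\pi^{*}$ to $\numg{\Psi}$.

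Combining these two ingredients, every eigenvalue of $\numg{\Psi}$ appears as an eigenvalue of $\numg{\Phi}$ restricted to $\pi^{*}(N(Y)_{\CB})$, and hence of $\numg{\Phi}$ itself. Since all eigenvalues of $\numg{\Phi}$ equal $1$, the same holds for $\numg{\Psi}$, giving $\rho(\numg{\Psi}) = 1$ and therefore $\log\rho(\numg{\Psi}) = 0$.

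The main subtlety I expect is the clean derivation of the intertwining identity $\pi^{*}\circ\Psi \cong \Phi\circ\pi^{*}$. Under the descent equivalence $D^{b}(Y)\simeq D^{b}(X)^{G}$, the pullback $\pi^{*}$ corresponds to the forgetful functor $\For$, so the assertion that $\Phi$ descends to $\Psi$ via $G$-equivariance translates into $\For\circ\Psi \cong \Phi\circ\For$; this is exactly the desired commutation after identifying $\pi^{*}$ with $\For$. The rest of the argument is purely formal.
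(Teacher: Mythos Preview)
Your argument is correct and follows essentially the same route as the paper's proof: injectivity of $\pi^{*}$ on numerical Grothendieck groups, the intertwining relation $\numg{\Phi}\circ\pi^{*}=\pi^{*}\circ\numg{\Psi}$, and the resulting inclusion of eigenvalues to bound $\rho(\numg{\Psi})$ by $\rho(\numg{\Phi})=1$. Your version is in fact more explicit than the paper's, which simply asserts the spectral radius bound ``from the commutativity of actions''; your justification of injectivity via $\pi_{*}\pi^{*}=|G|\cdot\id$ (valid on $N(Y)_{\CB}$ because the summands of $\pi_{*}\OO_X$ are torsion line bundles) and of the intertwining via the identification $\For\leftrightarrow\pi^{*}$ fills in exactly the details the paper leaves implicit.
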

\begin{proof}
    The covering map $\pi$ gives the injection between the numerical Grothendieck groups 
    $\pi^*:N(Y)_{\RB} \hookrightarrow N(X)_{\RB}$.
    Moreover, the spectral radius of $\numg{\Psi}$ can be bounded above by that of $\numg{\Phi}$ from the commutativity of actions on the numerical Grothendieck groups of $\Phi$ and $\pi^*$.
    Thus, we have
    \[
    0 = \log\rho(\numg{\Phi}) \ge \log\rho(\numg{\Psi}) \ge 0.
    \]
\end{proof}

Combining \cref{proposition: entropy coincide for descent and lift} and \cref{lemma: spectral radius of Psi on Enriques mfd}, we have the following theorem:

\begin{thm}\label{theorem: pos entr and counteg of GY for Enriques manifolds}
    Let $Y$ be an Enriques manifold.
    Then, $Y$ admits an autoequivalence $\Psi\in \Auteq Y$ that has positive categorical entropy and $\log\rho(\Psi)=0$. In particular, $Y$ fails to satisfy the \ref{property: Gromov-Yomdin property} property.
\end{thm}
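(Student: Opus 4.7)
The plan is to combine the three pieces assembled in this section: the explicit autoequivalence $\Phi$ on the hyperk\"ahler cover, its descent $\Psi$ to $Y$, and the two compatibility results (\cref{proposition: entropy coincide for descent and lift} and \cref{lemma: spectral radius of Psi on Enriques mfd}). Since $Y$ is projective, we can pick a very ample line bundle on $Y$ and let $\OO_X(1) \coloneqq \pi^{*}L$; this is very ample on $X$ and automatically $g^{*}$-invariant. With this choice, the autoequivalence $\Phi = P_{\OO_X}\circ (-\otimes \OO_X(-1))$ from \cref{setup: def of Phi on hk mfd} commutes with $g^{*}$, so by the discussion following the commutativity proposition (using \cite{ploog_2007_equivariant_autoequivalences_for_finite_group_actions}) it descends to an autoequivalence $\Psi \in \Auteq(Y)$ whose pullback is $\Phi$.

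Next I would invoke \cref{proposition: entropy coincide for descent and lift}, which gives
\[
\hcat(\Psi) = \hcat(\Phi).
\]
By \cref{theorem: GY property of hyperkahler manifold} applied to $X$, the right-hand side is strictly positive, so $\Psi$ already has positive categorical entropy. This alone establishes the first half of the statement (existence of an autoequivalence of $D^{b}(Y)$ with positive categorical entropy).

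For the Gromov--Yomdin part, I would apply \cref{lemma: spectral radius of Psi on Enriques mfd} to conclude $\log\rho(\numg{\Psi}) = 0$. Combining the two displays gives
\[
\hcat(\Psi) = \hcat(\Phi) > 0 = \log\rho(\numg{\Psi}),
\]
which is exactly the failure of the \ref{property: Gromov-Yomdin property} property for $Y$.

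There is no serious obstacle here, since all the technical content has already been proved earlier: the positivity of $\hcat(\Phi)$ uses the cohomology computation of \cref{proposition: vanishing and top coh of A_m}, the descent of entropy is \cref{proposition: entropy coincide for descent and lift}, and the vanishing of $\log\rho(\numg{\Psi})$ is \cref{lemma: spectral radius of Psi on Enriques mfd}. The only point requiring mild care is the construction of a $g^{*}$-invariant very ample $\OO_X(1)$ so that $\Phi$ actually descends; taking $\pi^{*}L$ for $L$ very ample on the projective variety $Y$ resolves this cleanly.
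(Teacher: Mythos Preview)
Your argument is essentially the paper's own: the theorem is stated immediately after ``Combining \cref{proposition: entropy coincide for descent and lift} and \cref{lemma: spectral radius of Psi on Enriques mfd}'', and you have reproduced exactly that combination together with the positivity of $\hcat(\Phi)$ from \cref{theorem: GY property of hyperkahler manifold}.

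One small slip to fix: for a nontrivial finite \'etale cover $\pi:X\to Y$, the pullback $\pi^{*}L$ of a very ample $L$ is \emph{not} very ample in general, since the induced map $X\to\PP{N}$ factors through $Y$ and hence fails to separate points in a fibre of $\pi$. It is, however, ample and $g^{*}$-invariant, so replacing $\pi^{*}L$ by a sufficiently high power (or, alternatively, taking $\bigotimes_{h\in G}h^{*}M$ for any very ample $M$ on $X$) gives the $g^{*}$-invariant very ample $\OO_X(1)$ you need; with that adjustment the proof is complete.
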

\bibliographystyle{amsalpha}
\bibliography{bibtex_tyoshida}

@book{book_huybrechts_2006_fouriermukai_transforms_in_algebraic_geometry,
	AUTHOR = {Huybrechts, D.},
	TITLE = {Fourier-{M}ukai transforms in algebraic geometry},
	SERIES = {Oxford Mathematical Monographs},
	PUBLISHER = {The Clarendon Press, Oxford University Press, Oxford},
	YEAR = {2006},
	PAGES = {viii+307},
	ISBN = {978-0-19-929686-6; 0-19-929686-3},
	MRCLASS = {14F05 (14-02 18E30)},
	MRNUMBER = {2244106},
	MRREVIEWER = {Bal\'{a}zs\ Szendr\H{o}i},
	DOI = {10.1093/acprof:oso/9780199296866.001.0001},
	URL = {https://doi.org/10.1093/acprof:oso/9780199296866.001.0001}
}

@article{huybrechts_thomas_2006_bbb_pobjects_and_autoequivalences_of_derived_categories,
	AUTHOR = {Huybrechts, Daniel and Thomas, Richard},
	TITLE = {{$\Bbb P$}-objects and autoequivalences of derived categories},
	JOURNAL = {Math. Res. Lett.},
	FJOURNAL = {Mathematical Research Letters},
	VOLUME = {13},
	YEAR = {2006},
	NUMBER = {1},
	PAGES = {87--98},
	ISSN = {1073-2780},
	MRCLASS = {14F05 (18E30)},
	MRNUMBER = {2200048},
	MRREVIEWER = {Bal\'{a}zs\ Szendr\H{o}i},
	DOI = {10.4310/MRL.2006.v13.n1.a7},
	URL = {https://doi.org/10.4310/MRL.2006.v13.n1.a7}
}

@article{bondal_orlov_2001_reconstruction_of_a_variety_from_the_derived_category_and_groups_of_autoequivalences,
	AUTHOR = {Bondal, Alexei and Orlov, Dmitri},
	TITLE = {Reconstruction of a variety from the derived category and
              groups of autoequivalences},
	JOURNAL = {Compositio Math.},
	FJOURNAL = {Compositio Mathematica},
	VOLUME = {125},
	YEAR = {2001},
	NUMBER = {3},
	PAGES = {327--344},
	ISSN = {0010-437X,1570-5846},
	MRCLASS = {18E30 (14F05)},
	MRNUMBER = {1818984},
	MRREVIEWER = {Richard\ P.\ Thomas},
	DOI = {10.1023/A:1002470302976},
	URL = {https://doi.org/10.1023/A:1002470302976}
}

@article{orlov_2009_remarks_on_generators_and_dimensions_of_triangulated_categories,
	AUTHOR = {Orlov, Dmitri},
	TITLE = {Remarks on generators and dimensions of triangulated
              categories},
	JOURNAL = {Mosc. Math. J.},
	FJOURNAL = {Moscow Mathematical Journal},
	VOLUME = {9},
	YEAR = {2009},
	NUMBER = {1},
	PAGES = {153--159, back matter},
	ISSN = {1609-3321,1609-4514},
	MRCLASS = {14F05 (18E30)},
	MRNUMBER = {2567400},
	MRREVIEWER = {Ana\ Cristina\ L\'opez-Mart\'in},
	DOI = {10.17323/1609-4514-2009-9-1-143-149},
	URL = {https://doi.org/10.17323/1609-4514-2009-9-1-143-149}
}

@article{bridgeland_king_reid_2001_the_mckay_correspondence_as_an_equivalence_of_derived_categories,
	AUTHOR = {Bridgeland, Tom and King, Alastair and Reid, Miles},
	TITLE = {The {M}c{K}ay correspondence as an equivalence of derived
              categories},
	JOURNAL = {J. Amer. Math. Soc.},
	FJOURNAL = {Journal of the American Mathematical Society},
	VOLUME = {14},
	YEAR = {2001},
	NUMBER = {3},
	PAGES = {535--554},
	ISSN = {0894-0347,1088-6834},
	MRCLASS = {14E15 (14F05 14J30 18E30 19L47)},
	MRNUMBER = {1824990},
	MRREVIEWER = {Ana\ Jerem\'ias L\'opez},
	DOI = {10.1090/S0894-0347-01-00368-X},
	URL = {https://doi.org/10.1090/S0894-0347-01-00368-X}
}

@article{bridgeland_maciocia_2001_complex_surfaces_with_equivalent_derived_categories,
	AUTHOR = {Bridgeland, Tom and Maciocia, Antony},
	TITLE = {Complex surfaces with equivalent derived categories},
	JOURNAL = {Math. Z.},
	FJOURNAL = {Mathematische Zeitschrift},
	VOLUME = {236},
	YEAR = {2001},
	NUMBER = {4},
	PAGES = {677--697},
	ISSN = {0025-5874,1432-1823},
	MRCLASS = {14F05 (14J60 18E30)},
	MRNUMBER = {1827500},
	MRREVIEWER = {Bal\'azs\ Szendr\H oi},
	DOI = {10.1007/PL00004847},
	URL = {https://doi.org/10.1007/PL00004847}
}

@article{yoshioka_2020_categorical_entropy_for_fouriermukai_transforms_on_generic_abelian_surfaces,
	AUTHOR = {Yoshioka, K\={o}ta},
	TITLE = {Categorical entropy for {F}ourier-{M}ukai transforms on
              generic abelian surfaces},
	JOURNAL = {J. Algebra},
	FJOURNAL = {Journal of Algebra},
	VOLUME = {556},
	YEAR = {2020},
	PAGES = {448--466},
	ISSN = {0021-8693,1090-266X},
	MRCLASS = {14F08 (14D20 18G80)},
	MRNUMBER = {4087242},
	MRREVIEWER = {Xuan\ Yu},
	DOI = {10.1016/j.jalgebra.2020.03.019},
	URL = {https://doi.org/10.1016/j.jalgebra.2020.03.019}
}

@article{ouchi_2020_on_entropy_of_spherical_twists,
	AUTHOR = {Ouchi, Genki},
	TITLE = {On entropy of spherical twists},
	NOTE = {With an appendix by Arend Bayer},
	JOURNAL = {Proc. Amer. Math. Soc.},
	FJOURNAL = {Proceedings of the American Mathematical Society},
	VOLUME = {148},
	YEAR = {2020},
	NUMBER = {3},
	PAGES = {1003--1014},
	ISSN = {0002-9939,1088-6826},
	MRCLASS = {14F08 (14J28)},
	MRNUMBER = {4055930},
	MRREVIEWER = {Chunyi\ Li},
	DOI = {10.1090/proc/14762},
	URL = {https://doi.org/10.1090/proc/14762}
}

@article{kikuta_shiraishi_takahashi_2020_a_note_on_entropy_of_autoequivalences_lower_bound_and_the_case_of_orbifold_projective_lines,
	AUTHOR = {Kikuta, Kohei and Shiraishi, Yuuki and Takahashi, Atsushi},
	TITLE = {A note on entropy of auto-equivalences: lower bound and the
              case of orbifold projective lines},
	JOURNAL = {Nagoya Math. J.},
	FJOURNAL = {Nagoya Mathematical Journal},
	VOLUME = {238},
	YEAR = {2020},
	PAGES = {86--103},
	ISSN = {0027-7630,2152-6842},
	MRCLASS = {18G80 (14F08 16E35 18F99 37A35 37F80 37P99)},
	MRNUMBER = {4092848},
	MRREVIEWER = {Mee\ Seong\ Im},
	DOI = {10.1017/nmj.2018.21},
	URL = {https://doi.org/10.1017/nmj.2018.21}
}

@article{kikuta_takahashi_2019_on_the_categorical_entropy_and_the_topological_entropy,
	AUTHOR = {Kikuta, Kohei and Takahashi, Atsushi},
	TITLE = {On the categorical entropy and the topological entropy},
	JOURNAL = {Int. Math. Res. Not. IMRN},
	FJOURNAL = {International Mathematics Research Notices. IMRN},
	YEAR = {2019},
	NUMBER = {2},
	PAGES = {457--469},
	ISSN = {1073-7928,1687-0247},
	MRCLASS = {14F05 (18E30 37B40)},
	MRNUMBER = {3903564},
	MRREVIEWER = {Xuan\ Yu},
	DOI = {10.1093/imrn/rnx131},
	URL = {https://doi.org/10.1093/imrn/rnx131}
}

@article{kikuta_2017_on_entropy_for_autoequivalences_of_the_derived_category_of_curves,
	AUTHOR = {Kikuta, Kohei},
	TITLE = {On entropy for autoequivalences of the derived category of
              curves},
	JOURNAL = {Adv. Math.},
	FJOURNAL = {Advances in Mathematics},
	VOLUME = {308},
	YEAR = {2017},
	PAGES = {699--712},
	ISSN = {0001-8708,1090-2082},
	MRCLASS = {14F05 (18F30 37B40)},
	MRNUMBER = {3600071},
	MRREVIEWER = {Jian\ Min\ Chen},
	DOI = {10.1016/j.aim.2016.12.027},
	URL = {https://doi.org/10.1016/j.aim.2016.12.027}
}

@article{oguiso_schroer_2011_enriques_manifolds,
	AUTHOR = {Oguiso, Keiji and Schr\"oer, Stefan},
	TITLE = {Enriques manifolds},
	JOURNAL = {J. Reine Angew. Math.},
	FJOURNAL = {Journal f\"ur die Reine und Angewandte Mathematik. [Crelle's
              Journal]},
	VOLUME = {661},
	YEAR = {2011},
	PAGES = {215--235},
	ISSN = {0075-4102,1435-5345},
	MRCLASS = {53C26 (32Q25)},
	MRNUMBER = {2863907},
	MRREVIEWER = {Kieran\ G.\ O'Grady},
	DOI = {10.1515/CRELLE.2011.077},
	URL = {https://doi.org/10.1515/CRELLE.2011.077}
}

@incollection{oguiso_2007_automorphisms_of_hyperkahler_manifolds_in_the_view_of_topological_entropy,
	AUTHOR = {Oguiso, Keiji},
	TITLE = {Automorphisms of hyperk\"ahler manifolds in the view of
              topological entropy},
	BOOKTITLE = {Algebraic geometry},
	SERIES = {Contemp. Math.},
	VOLUME = {422},
	PAGES = {173--185},
	PUBLISHER = {Amer. Math. Soc., Providence, RI},
	YEAR = {2007},
	ISBN = {978-0-8218-4201-0; 0-8218-4201-3},
	MRCLASS = {14J50 (14J28 14J40 37B40 53C26)},
	MRNUMBER = {2296437},
	MRREVIEWER = {Charles\ Favre},
	DOI = {10.1090/conm/422/08060},
	URL = {https://doi.org/10.1090/conm/422/08060}
}

@article{cantat_1999_dynamique_des_automorphismes_des_surfaces_projectives_complexes,
	AUTHOR = {Cantat, Serge},
	TITLE = {Dynamique des automorphismes des surfaces projectives
              complexes},
	JOURNAL = {C. R. Acad. Sci. Paris S\'er. I Math.},
	FJOURNAL = {Comptes Rendus de l'Acad\'emie des Sciences. S\'erie I.
              Math\'ematique},
	VOLUME = {328},
	YEAR = {1999},
	NUMBER = {10},
	PAGES = {901--906},
	ISSN = {0764-4442},
	MRCLASS = {32H50 (14J50 37A25 37B40 37F10)},
	MRNUMBER = {1689873},
	MRREVIEWER = {Eric\ Bedford},
	DOI = {10.1016/S0764-4442(99)80294-8},
	URL = {https://doi.org/10.1016/S0764-4442(99)80294-8}
}

@article{barbacovi_kim_2023_entropy_of_the_composition_of_two_spherical_twists,
	AUTHOR = {Barbacovi, Federico and Kim, Jongmyeong},
	TITLE = {Entropy of the composition of two spherical twists},
	JOURNAL = {Osaka J. Math.},
	FJOURNAL = {Osaka Journal of Mathematics},
	VOLUME = {60},
	YEAR = {2023},
	NUMBER = {3},
	PAGES = {653--670},
	ISSN = {0030-6126},
	MRCLASS = {18G80 (14F08 37B40)},
	MRNUMBER = {4612509},
	MRREVIEWER = {Mee\ Seong\ Im},
	DOI = {10.1007/s11118-023-10083-8},
	URL = {https://doi.org/10.1007/s11118-023-10083-8}
}

@article{mattei_2021_categorical_vs_topological_entropy_of_autoequivalences_of_surfaces,
	AUTHOR = {Mattei, Dominique},
	TITLE = {Categorical vs topological entropy of autoequivalences of
              surfaces},
	JOURNAL = {Mosc. Math. J.},
	FJOURNAL = {Moscow Mathematical Journal},
	VOLUME = {21},
	YEAR = {2021},
	NUMBER = {2},
	PAGES = {401--412},
	ISSN = {1609-3321,1609-4514},
	MRCLASS = {14F08 (18G80 37B40)},
	MRNUMBER = {4252457},
	MRREVIEWER = {Antony\ Maciocia},
	DOI = {10.17323/1609-4514-2021-21-2-401-412},
	URL = {https://doi.org/10.17323/1609-4514-2021-21-2-401-412}
}

@incollection{dimitrov_haiden_katzarkov_kontsevich_2014_dynamical_systems_and_categories,
	AUTHOR = {Dimitrov, G. and Haiden, F. and Katzarkov, L. and Kontsevich,
              M.},
	TITLE = {Dynamical systems and categories},
	BOOKTITLE = {The influence of {S}olomon {L}efschetz in geometry and
              topology},
	SERIES = {Contemp. Math.},
	VOLUME = {621},
	PAGES = {133--170},
	PUBLISHER = {Amer. Math. Soc., Providence, RI},
	YEAR = {2014},
	ISBN = {978-0-8218-9494-1},
	MRCLASS = {18E30 (14F05 16G20 37B40)},
	MRNUMBER = {3289326},
	MRREVIEWER = {Steven\ M.\ Pederson},
	DOI = {10.1090/conm/621/12421},
	URL = {https://doi.org/10.1090/conm/621/12421}
}

@article{fan_2018_entropy_of_an_autoequivalence_on_calabiyau_manifolds,
	AUTHOR = {Fan, Yu-Wei},
	TITLE = {Entropy of an autoequivalence on {C}alabi-{Y}au manifolds},
	JOURNAL = {Math. Res. Lett.},
	FJOURNAL = {Mathematical Research Letters},
	VOLUME = {25},
	YEAR = {2018},
	NUMBER = {2},
	PAGES = {509--519},
	ISSN = {1073-2780,1945-001X},
	MRCLASS = {14J32 (14F05 18E30)},
	MRNUMBER = {3826832},
	MRREVIEWER = {Tyler\ L.\ Kelly},
	DOI = {10.4310/MRL.2018.v25.n2.a8},
	URL = {https://doi.org/10.4310/MRL.2018.v25.n2.a8}
}

@article{yomdin_1987_volume_growth_and_entropy,
	AUTHOR = {Yomdin, Y.},
	TITLE = {Volume growth and entropy},
	JOURNAL = {Israel J. Math.},
	FJOURNAL = {Israel Journal of Mathematics},
	VOLUME = {57},
	YEAR = {1987},
	NUMBER = {3},
	PAGES = {285--300},
	ISSN = {0021-2172},
	MRCLASS = {58C27 (32B20 54H20 57R45 58F11)},
	MRNUMBER = {889979},
	MRREVIEWER = {Pierre\ D.\ Milman},
	DOI = {10.1007/BF02766215},
	URL = {https://doi.org/10.1007/BF02766215}
}

@article{gromov_2003_on_the_entropy_of_holomorphic_maps,
	AUTHOR = {Gromov, Mikha\"il},
	TITLE = {On the entropy of holomorphic maps},
	JOURNAL = {Enseign. Math. (2)},
	FJOURNAL = {L'Enseignement Math\'ematique. Revue Internationale. 2e
              S\'erie},
	VOLUME = {49},
	YEAR = {2003},
	NUMBER = {3-4},
	PAGES = {217--235},
	ISSN = {0013-8584},
	MRCLASS = {37F10 (32H50 37B40)},
	MRNUMBER = {2026895},
	MRREVIEWER = {Serge\ Cantat}
}

@article{gromov_1987_entropy_homology_and_semialgebraic_geometry,
	AUTHOR = {Gromov, M.},
	TITLE = {Entropy, homology and semialgebraic geometry},
	NOTE = {S\'eminaire Bourbaki, Vol.\ 1985/86},
	JOURNAL = {Ast\'erisque},
	FJOURNAL = {Ast\'erisque},
	NUMBER = {145-146},
	YEAR = {1987},
	PAGES = {5, 225--240},
	ISSN = {0303-1179,2492-5926},
	MRCLASS = {58F11 (28D20 58C25)},
	MRNUMBER = {880035},
	MRREVIEWER = {H.\ Kn\"orrer}
}

@article{ploog_2007_equivariant_autoequivalences_for_finite_group_actions,
	AUTHOR = {Ploog, David},
	TITLE = {Equivariant autoequivalences for finite group actions},
	JOURNAL = {Adv. Math.},
	FJOURNAL = {Advances in Mathematics},
	VOLUME = {216},
	YEAR = {2007},
	NUMBER = {1},
	PAGES = {62--74},
	ISSN = {0001-8708,1090-2082},
	MRCLASS = {14L30 (14F05 18E30)},
	MRNUMBER = {2353249},
	MRREVIEWER = {Alessio\ Del Padrone},
	DOI = {10.1016/j.aim.2007.05.002},
	URL = {https://doi.org/10.1016/j.aim.2007.05.002}
}

@article{seidel_thomas_2001_braid_group_actions_on_derived_categories_of_coherent_sheaves,
	AUTHOR = {Seidel, Paul and Thomas, Richard},
	TITLE = {Braid group actions on derived categories of coherent sheaves},
	JOURNAL = {Duke Math. J.},
	FJOURNAL = {Duke Mathematical Journal},
	VOLUME = {108},
	YEAR = {2001},
	NUMBER = {1},
	PAGES = {37--108},
	ISSN = {0012-7094,1547-7398},
	MRCLASS = {14F05 (14J32 18E30 20F36 53D40)},
	MRNUMBER = {1831820},
	MRREVIEWER = {Mikhail\ G.\ Khovanov},
	DOI = {10.1215/S0012-7094-01-10812-0},
	URL = {https://doi.org/10.1215/S0012-7094-01-10812-0}
}

@article{maulik_shen_yin_zhang_2025_the_dequivalence_conjecture_for_hyperkahler_varieties_via_hyperholomorphic_bundles,
	AUTHOR = {Maulik, Davesh and Shen, Junliang and Yin, Qizheng and Zhang,
              Ruxuan},
	TITLE = {The {$D$}-equivalence conjecture for hyper-{K}\"ahler
              varieties via hyperholomorphic bundles},
	JOURNAL = {Invent. Math.},
	FJOURNAL = {Inventiones Mathematicae},
	VOLUME = {241},
	YEAR = {2025},
	NUMBER = {1},
	PAGES = {309--324},
	ISSN = {0020-9910,1432-1297},
	MRCLASS = {14F08 (14J42)},
	MRNUMBER = {4921575},
	DOI = {10.1007/s00222-025-01339-8},
	URL = {https://doi-org.waseda.idm.oclc.org/10.1007/s00222-025-01339-8}
}

@article{haiman_2001_hilbert_schemes_polygraphs_and_the_macdonald_positivity_conjecture,
	AUTHOR = {Haiman, Mark},
	TITLE = {Hilbert schemes, polygraphs and the {M}acdonald positivity
              conjecture},
	JOURNAL = {J. Amer. Math. Soc.},
	FJOURNAL = {Journal of the American Mathematical Society},
	VOLUME = {14},
	YEAR = {2001},
	NUMBER = {4},
	PAGES = {941--1006},
	ISSN = {0894-0347,1088-6834},
	MRCLASS = {14C05 (05E05 20C30 33D45)},
	MRNUMBER = {1839919},
	MRREVIEWER = {Claudio\ Procesi},
	DOI = {10.1090/S0894-0347-01-00373-3},
	URL = {https://doi-org.waseda.idm.oclc.org/10.1090/S0894-0347-01-00373-3}
}

@article{krug_2015_on_derived_autoequivalences_of_hilbert_schemes_and_generalized_kummer_varieties,
	AUTHOR = {Krug, Andreas},
	TITLE = {On derived autoequivalences of {H}ilbert schemes and
              generalized {K}ummer varieties},
	JOURNAL = {Int. Math. Res. Not. IMRN},
	FJOURNAL = {International Mathematics Research Notices. IMRN},
	YEAR = {2015},
	NUMBER = {20},
	PAGES = {10680--10701},
	ISSN = {1073-7928,1687-0247},
	MRCLASS = {14F05 (14C05)},
	MRNUMBER = {3455879},
	MRREVIEWER = {Shintarou\ Yanagida},
	DOI = {10.1093/imrn/rnv005},
	URL = {https://doi-org.waseda.idm.oclc.org/10.1093/imrn/rnv005}
}

@book{book_gross_huybrechts_joyce_2003_calabiyau_manifolds_and_related_geometries,
	AUTHOR = {Gross, M. and Huybrechts, D. and Joyce, D.},
	TITLE = {Calabi-{Y}au manifolds and related geometries},
	SERIES = {Universitext},
	NOTE = {Lectures from the Summer School held in Nordfjordeid, June
              2001},
	PUBLISHER = {Springer-Verlag, Berlin},
	YEAR = {2003},
	PAGES = {viii+239},
	ISBN = {3-540-44059-3},
	MRCLASS = {14J32 (32Q25 53C26 53C38 53D12)},
	MRNUMBER = {1963559},
	MRREVIEWER = {Richard\ P.\ Thomas},
	DOI = {10.1007/978-3-642-19004-9},
	URL = {https://doi-org.waseda.idm.oclc.org/10.1007/978-3-642-19004-9}
}

@article{beckmann_2025_atomic_objects_on_hyperkahler_manifolds,
	AUTHOR = {Beckmann, Thorsten},
	TITLE = {Atomic objects on hyper-{K}\"ahler manifolds},
	JOURNAL = {J. Algebraic Geom.},
	FJOURNAL = {Journal of Algebraic Geometry},
	VOLUME = {34},
	YEAR = {2025},
	NUMBER = {1},
	PAGES = {109--160},
	ISSN = {1056-3911,1534-7486},
	MRCLASS = {14F08 (14J42)},
	MRNUMBER = {4817293},
	MRREVIEWER = {Dylan\ Spence}
}

@article{beckmann_oberdieck_2023_on_equivariant_derived_categories,
	AUTHOR = {Beckmann, Thorsten and Oberdieck, Georg},
	TITLE = {On equivariant derived categories},
	JOURNAL = {Eur. J. Math.},
	FJOURNAL = {European Journal of Mathematics},
	VOLUME = {9},
	YEAR = {2023},
	NUMBER = {2},
	PAGES = {Paper No. 36, 39},
	ISSN = {2199-675X,2199-6768},
	MRCLASS = {14F08 (13D03 14H60 18G80)},
	MRNUMBER = {4589277},
	MRREVIEWER = {Stefan\ Schr\"oer},
	DOI = {10.1007/s40879-023-00635-y},
	URL = {https://doi-org.waseda.idm.oclc.org/10.1007/s40879-023-00635-y}
}

@misc{preprint_alexey_2015_on_equivariant_triangulated_categories,
	title = {On equivariant triangulated categories},
	author = {Alexey Elagin},
	year = {2015},
	eprint = {1403.7027},
	archivePrefix = {arXiv},
	primaryClass = {math.AG},
	howpublished = {https://arxiv.org/abs/1403.7027}
}

@misc{preprint_tomoki_2025_a_note_on_categorical_entropy_of_bielliptic_surfaces_and_enriques_surfaces,
	title = {A Note on Categorical Entropy of Bielliptic Surfaces and Enriques Surfaces},
	author = {Tomoki Yoshida},
	year = {2025},
	eprint = {2507.03890},
	archivePrefix = {arXiv},
	primaryClass = {math.AG},
	howpublished = {https://arxiv.org/abs/2507.03890}
}

@misc{preprint_yuwei_2025_on_entropy_of_mathbbptwists,
	title = {On entropy of $\mathbb{P}$-twists},
	author = {Yu-Wei Fan},
	year = {2025},
	eprint = {1801.10485v2},
	archivePrefix = {arXiv},
	primaryClass = {math.AG},
	howpublished = {https://arxiv.org/abs/1801.10485v2}
}
\end{document}